\newtheorem{theorem}{Theorem}[section]
\newtheorem{lemma}[theorem]{Lemma}
\newtheorem{proposition}[theorem]{Proposition}
\newtheorem{corollary}[theorem]{Corollary}
\newtheorem{definition}[theorem]{Definition}
\newtheorem{condition}[theorem]{Condition}
\newtheorem{alg}[theorem]{Algorithm}
\theoremstyle{remark}
\newtheorem{remark}[theorem]{Remark}
\numberwithin{equation}{section}
\DeclareMathOperator{\Spec}{Spec}
\DeclareMathOperator{\vol}{vol}
\newcommand{\norm}[1]{\lVert #1 \rVert}
\newcommand{\Rn}{\mathbb{R}^n}
\newcommand{\spec}{\textrm{spec}}
\begin{document}

\title[Computing spectrum of infinite-volume operators from local patches]{Computing the spectrum and pseudospectrum of infinite-volume operators from local patches}

\author{Paul Hege}
\address{Paul Hege, Department of Neural Dynamics and MEG, Hertie Institute for Clinical Brain Research, Otfried-M\"uller-Str.~25, 72076 T\"ubingen, Germany; Werner Reichardt Centre for Integrative Neuroscience, Otfried-M\"uller-Str.~25, 72076 T\"ubingen, Germany; MEG Center, University of Tübingen, Otfried-M\"uller-Str.~47, 72076 T\"ubingen, Germany}
\email{paul-bernhard.hege@uni-tuebingen.de}

\author{Massimo Moscolari}
\address{Massimo Moscolari, Dipartimento di Matematica, Politecnico di Milano, Piazza Leonardo da Vinci 32, 20133 Milano, Italy}
\email{massimo.moscolari@polimi.it}

\author{Stefan Teufel}
\address{Stefan Teufel, Mathematisches Institut, University of T\"ubingen, Auf der Morgenstelle 10, 72076 T\"ubingen, Germany}
\email{stefan.teufel@uni-tuebingen.de}

\subjclass[2000]{65Y20, 03D78, 65F99}

\date{15 April 2024, and, in revised form, December 10, 2024}

\keywords{Spectral Theory, Pseudospectrum, Computability Theory}

\begin{abstract}
We show how the spectrum of normal discrete short-range infinite-volume operators can be approximated with two-sided error control using only data from finite-sized local patches.  As a corollary, we prove the computability  of the  spectrum of such  infinite-volume operators with the additional property of finite local complexity and provide an explicit algorithm. Such operators appear in many applications, e.g.\ as discretizations of differential operators  on unbounded domains or as so-called tight-binding Hamiltonians in solid state physics. For a large class of such operators, our result allows for the first time to establish computationally also the absence of spectrum, i.e.\  the existence and the size of spectral gaps.
We   extend our results to the $\varepsilon$-pseudospectrum of non-normal operators, proving that also the pseudospectrum of such operators is computable.
\end{abstract}

\maketitle

\tableofcontents

\section{Introduction}

The computation of spectra of linear operators is a fundamental problem that has been studied at various levels of generality. For finite-dimensional matrices, the theorem of Abel and Ruffini shows that there is no closed-form solution \cite{abel,ruffini}, but the eigenvalues can be approximated numerically \cite{vonmises_1929,golub_2001,saad_2011}. The approximation error can be bounded using statements such as  the Gershgorin and Bauer-Fike theorems \cite{gershgorin,bauer_fike,taussky_1988}, and there are also algorithms that compute validated intervals for the eigenvalues, thus providing rigorous error control \cite{wilkinson_1961,yamamoto_1980,yamamoto_1982}.

Operators on infinite-dimensional spaces are usually studied by restriction to a finite-dimensional subspace. Rigorous upper bounds on the eigenvalues can be derived from finite-dimensional approximations using the Rayleigh-Ritz method \cite{ritz_1909,plum_2021}. A number of methods have been proposed over the years to compute complementary lower bounds on the eigenvalues \cite{weinstein_1937,kato_1949,weyl_1950,bazley_1961,bazleyFox1961,bazleyFox1966,behnke_1994,behnke_1995,daviesHierarchical}. These methods usually assume that the spectrum  in a certain energy window
consists of a finite number of eigenvalues \cite{plum_2021}. Thus, they are applicable for example to  differential operators on compact domains. For this setting, e.g.\ finite element methods with error control are   available \cite{carstensen_2014,carstensen_2014a,liu_2015,hu_2016}.

In this paper, we consider the problem of computing spectra with rigorous and explicit  error control for \textit{short-range infinite volume operators}, a class of operators for which the known methods do not apply and which are at the same time very relevant for applications.
We say that a bounded operator $H$ on a separable Hilbert space $\mathcal H$
is a short-range infinite volume operator, if
 there is an  orthonormal basis $(e_x)_{x \in \Gamma}$ indexed by a uniformly discrete subset $\Gamma \subset \Rn$ for some $n\in\mathbb{N}$, such that the matrix elements $H_{xy}:= \langle e_x,He_y\rangle$ of $H$ fulfill the \textit{short-range} condition
\begin{align*}
|H_{xy}| \leq C \frac{1}{d(x,y)^{n+\varepsilon}}
\end{align*}
for some $C$, $\varepsilon > 0$ and all $x,y\in\Gamma$. Here $d$ denotes the maximum distance on $\mathbb{R}^n$, cf.\ \eqref{DefMaxDist}. Important examples of such short-range infinite volume operators are discretizations of differential operators on infinite domains and so-called tight-binding models from solid state physics.
In Section~\ref{subsec-examples} we briefly discuss examples of operators arising from applications for which computability of the spectrum was not previously known and to which our algorithm can be applied.

Since there is an extensive mathematical literature dealing with the spectral problem for infinite volume operators, we will briefly comment on some recent results.  The easiest method to implement is probably the finite section method, which is not only widely used in practice, but has also been studied theoretically from various perspectives \cite{arveson_1993,boettcher_1994,chandlerwilde_2016,lindner_2003,lindner_2006,chandlerwilde_2013}. The finite section $H_\Lambda$ of $H$ on a finite subset $\Lambda\subset\Gamma$ is just the square matrix $(H_{xy})_{x,y\in\Lambda}$. For certain classes of operators, sequences of finite sections can be found such that their spectra are provably convergent. However, the occurrence of spectral pollution at the boundaries makes it difficult to achieve good error control for general operators using finite sections \cite{daviesPollution,lewinPollution}.

Another popular approach to computing the spectrum of infinite-volume operators with strong error control is based on approximating aperiodic operators by periodic ones, e.g.\ \cite{beckusTakase,beckusDelone,colbrook_2020}. Bounding the approximation error of such periodic approximations has motivated a number of results proving the Lipschitz or Hölder continuity of the spectrum in the coefficients for a large   class of operators connected to dynamical systems \cite{bellissardOneDimensional,discreteGroups}. Because there are easily computable bounds on the Lipschitz constants, these continuity results can be used to provide error control for the convergence of the approximant spectra in Hausdorff distance.

To make practical use of these bounds, however, suitable periodic approximations are required. To use the Lipschitz continuity of the spectrum, the periodic approximant has to have the same set of local patches as the infinite-volume operator at a certain scale. While the existence of such periodic approximations for substitution systems is increasingly well-understood in one dimension \cite{beckus_2018a,damanik2022one,tenenbaum_2024} the situation in higher dimensions is more complex and under active investigation \cite{beckus2021symbolic,band_2024}. In both cases, there are important examples of operators that cannot be approximated periodically, such as the jump potential in one dimension, or the two-dimensional Penrose tiling, for which the local matching rules force an aperiodic pattern \cite{penrose_1979}. Therefore, while the dynamical systems method provides strong error control, it requires the construction of periodic approximants, which is not always possible and for which no general algorithm is known.

A different way to compute spectra is the recently proposed method of \textit{uneven sections} \cite{colbrook_2019,colbrook_2020}, in which the operator $H$ is approximated by a rectangular matrix $(H_{xy})_{x\in\Lambda',y\in\Lambda}$ for some finite $\Lambda\subset\Lambda'\subset\Gamma$.  In addition to effectively reducing spectral pollution compared to the finite section method, this method also provides one-sided error control \cite{colbrook_2019}.

A central object in the following discussions is  the so-called \textit{lower norm function}
\begin{align}
\label{eqRho}
 \rho_H(\lambda) = \begin{cases} \big\lVert (H - \lambda)^{-1} \big\rVert^{-1} &\text{for }\lambda \notin \Spec(H) \\
          0  &\text{otherwise}\,.
           \end{cases}
\end{align}
For normal operators $H$ it satisfies
$\rho_\lambda(H) = d(\lambda, \Spec(H))$ and  for general operators it can serve as a defintion of the  $\varepsilon$-pseudospectrum, cf.\ \eqref{DefPseudo}.

The method from \cite{colbrook_2019} implies (cf.\ Theorem~\ref{upperBoundTheorem} below) that for every  patch size $L > 0$  and center point $x\in \mathbb{R}^n$  the smallest singular value  $\varepsilon_{L, \lambda, x}\geq 0$ of the rectangular matrix  $(H_{yz})_{y\in B_{L+m}(x),z\in B_{L}(x)}$ satisfies
\begin{align}
\label{colbrookUpperBound}
\rho_H(\lambda) \leq \varepsilon_{L, \lambda, x}\,
\end{align}
and thus, for normal $H$, also
\begin{align}
\label{colbrookUpperBound2}
d(\lambda, \Spec(H)) \leq \varepsilon_{L, \lambda, x}\,.
\end{align}
Here $B_L(x)\subset \mathbb{R}^n$ denotes the hypercube around $x$ with side length $2L$ and $m$ is a fixed finite number.

This form of error control is only one-sided, however, as there is no lower bound for $\rho_H(\lambda)$. In fact, the authors of \cite{colbrook_2019} prove that no algorithm giving a lower bound on $\rho_H(\lambda)$ can exist as long as the operator is given solely by its matrix elements \cite{hansen_2011}.
The impossibility of a lower bound implies that only the existence of spectrum in a certain interval can be shown, while the absence of spectrum (a spectral gap) can not be rigorously established in this general setting. This also implies that it is not possible to give a bound on the Hausdorff distance to the infinite-volume spectrum, as for example in the periodic approximation approach of \cite{beckusTakase,beckusDelone}.

However, for many applications involving infinite volume operators the existence and size of spectral gaps is of central improtance.
In a previous paper, we have shown that the no-go theorem which rules out a lower bound on $\rho_H(\lambda)$ can be circumvented in most cases of physical interest, and have given a practical algorithm to compute validated spectral gaps of an infinite-volume system by considering all subsystems of a given size \cite{hege_2022}. In the present work, we expand on this by giving a general algorithm to compute the spectrum of operators with finite local complexity (\textit{flc}) with full error control.
The primary insight making this possible is that for finite range operators knowledge of the infimum of
$\varepsilon_{L, \lambda, x}$ over \emph{all centers} $x\in\Rn$ is sufficient to provide a quantitative lower bound  on $\rho_H(\lambda)$. More precisely, for
\begin{align*}
\varepsilon_{L, \lambda} := \inf_{x \in \Rn} \varepsilon_{L, \lambda, x}\,,
\end{align*}
we prove the following theorem.

\begin{theorem}
\label{introThmNew}
Let  $n \in \mathbb{Z}_+$, let  $\Gamma \subseteq \Rn$ be a uniformly discrete   set with  packing radius $q>0$, and let $H$ be a bounded operator on $\ell^2(\Gamma)$ with finite range $m$ and $M:= \sup_{x,y\in\Gamma}|H_{xy}|$.
Then for every $\lambda\in\mathbb{C}$ and every $L>m$ it holds that
\begin{align}
\rho_H(\lambda) \;\geq\; \varepsilon_{L, \lambda} - \tfrac{C}{L} \,,
\label{ineqIntroThmNew}
\end{align}
where
\begin{align}
 \label{definitionC}
C := m\,M   \left( \frac{36m}{q} \right)^{n/2}\,.
\end{align}
\end{theorem}

The practical significance of Theorem~\ref{introThmNew} lies in the observation that for \textit{flc} operators, the infimum $\varepsilon_{L, \lambda}$ can be computed by evaluating $\varepsilon_{L,\lambda,x}$ for a finite number of suitably chosen centres $x \in \mathbb{R}^n$. We show that combining \eqref{colbrookUpperBound} and \eqref{ineqIntroThmNew} then leads to a general algorithm for computing the spectrum of \textit{flc} operators with error control. That is, given any short-range, discrete, normal, \textit{flc} operator $H$ and $k \in \mathbb{N}$, our algorithm computes an approximation $\Gamma_k(H) \subseteq \mathbb{C}$ such that
\begin{align}
\label{eqIntroHausdorffConv}
d_\mathrm{H}(\Spec(H), \Gamma_k(H)) \leq 2^{-k}\,,
\end{align}
where $d_\mathrm{H}$ is the Hausdorff distance on subsets of $\mathbb{C}$. Thus the algorithm is able to compute the spectrum of \textit{flc} infinite-volume operators to any given precision in Hausdorff distance.

Inequalities similar to \eqref{ineqIntroThmNew}  have been established for one-dimensional systems in \cite{chonchaiya_2011,chandlerwilde_2013,chandlerwilde_2023,chandlerwilde_2024,lindner_unpublished}, where $\Gamma=\mathbb{Z}\subset\mathbb{R}$,   $H$ is a band matrix (or band-dominated matrix) acting on $\ell^2(\mathbb Z, X)$. In contrast to our work, which focuses on the Hilbert space setting, these papers have studied the situation in general Banach spaces $X$ (in which case some additional assumptions on $H^* - \lambda \mathbf{1}$ are needed).

In an earlier version of this paper,  we used methods similar to~\cite{hege_2022} to obtain a lower bound on~$\rho_H(\lambda)$. However, while the resolvent method described in~\cite{hege_2022} can be more efficient for concrete computations by exploiting the exponential decay of edge states with distance to the edge, applying these methods in the current context leads to a more complicated lower bound on $\rho_H(\lambda)$ with $1/\sqrt{L}$ asymptotics.  In this revised version we have instead taken inspiration from the methods in~\cite{lindner_unpublished} and prove a lower bound on $\rho_H(\lambda)$ with $1/L$ asymptotics. The result for one-dimensional systems shown in \cite{chandlerwilde_2024} suggests that $1/L$ is the optimal asymptotic.

For non-normal operators, we can use Theorem~\ref{introThmNew} to show the computability of the $\varepsilon$-pseudospectrum \cite{landau_1975,varah_1979,hinrichsen_1991,trefethenEmbreeBook} $\Spec_\varepsilon(H)$. Again, we provide an algorithm which computes for any $\varepsilon > 0$ an approximation $\Gamma_k(H, \varepsilon)$ such that
\begin{align}
\label{eqIntroHausdorffConvPseudo}
d_H(\Spec_\varepsilon(H), \Gamma_k(H, \varepsilon)) \leq 2^{-k}\,.
\end{align}

The existence of algorithms fulfilling \eqref{eqIntroHausdorffConv} and \eqref{eqIntroHausdorffConvPseudo} should be viewed in the context of previous results about the computability of the general infinite-dimensional spectral problem \cite{hansen_2011}. In extending the theory of computability from discrete computations to analytical and numerical problems \cite{turing_1937, blum_1989, blum_1998}, the \textit{solvability complexity index} (SCI) is a very useful classification of computational problems by the number of limits required for their solution \cite{hansen_2011}. The spectral problem for operators on infinite-dimensional spaces has been a particular focus of investigation for determining the SCI \cite{daviesLectureNote,hansen_2008,hansen_2010,colbrook_2022,hansen_2016,universalPeriodic,geometricFeatures,roeslerSCI}.

If a general infinite-volume operator is given by its matrix entries, it has been shown that it is impossible to compute the spectrum with error control \cite{colbrook_2019}. But it is possible to compute the spectrum via a series of convergent estimates (without error control), which places the general spectral problem for self-adjoint operators in the ${\rm SCI} = 1$ class, which require one limit to solve \cite{hansen_2011}. The precise SCI has also been determined for many spectral problems differing in the conditions placed on $H$ \cite{benartzi_2020}. The possibility of one-sided error control by an upper bound on the distance to spectrum has also inspired an intermediate SCI class $\Sigma_1$ that is in between the classes $\Delta_1$ and $\Delta_2$, the classes of SCI $= 1$ with and without error control, respectively \cite{colbrook_2022,benartzi_2020}.

The authors of this no-go theorem have stressed, however, that the SCI of a given computation can be lowered through additional structure and conditions on the general problem \cite{benartzi_2020}. This is precisely what we achieve here by introducing the structure of finite local complexity. It turns out that by adding this requirement, the spectrum becomes computable and the \textit{flc} spectral problem is therefore in class $\Delta_1$~(full error control), instead of the class $\Delta_2$~(SCI~$= 1$) or $\Sigma_1$ (one-sided error control), which contains the general spectral problem. This lowering of the SCI is especially interesting in light of the fact the \textit{flc} is a rather general condition that is fulfilled by most operators that occur in practice.

\section{Definitions of computational problems}
\label{sec-definitions}

To prove the computability of the spectrum for operators of finite local complexity, we require a clear notion of computational problems and their solvability. Following \cite{benartzi_2020}, we define a computational problem by the following data:

\begin{definition}
 A \textit{computational problem} is a tuple $(\Omega, \Lambda, (\mathcal M, d), \Xi)$, where
 \begin{itemize}
  \item $\Omega$ is a set (the ``set of problems'');
  \item $\Lambda$ is a family of functions $\Lambda = (f_i)_{i\in\mathcal{I}}$, indexed by a countable set $\mathcal{I}$, where each $f_i$ is a function $f_i : \Omega \to \mathbb{R}$ (the ``evaluation functions'');
  \item $(\mathcal M, d)$ is a metric space (the metric space of ``possible solutions'');
  \item $\Xi$ is a function $\Omega \to \mathcal M$ (the ``problem function'').
 \end{itemize}
\end{definition}

\noindent
In this definition, the set $\Omega$ is the set of concrete problems that an algorithm has to be able to solve. For spectral problems, this would correspond to a certain set of operators. The exact solution of the problem is given by the function $\Xi$, which takes values in the metric space $(\mathcal M, d)$. For the spectral case, we would choose $\Xi(A) = \Spec(A)$ for any $A \in \Omega$, and $\mathcal{M}$ would be the power set of $\mathbb{C}$. Because we approximate the spectrum and pseudospectrum in Hausdorff distance, we equip $\mathcal M$ with the Hausdorff distance $d_H$. The functions $f_i \in \Lambda$, finally, define how the algorithm can get information about the concrete problem. For the spectral problem as defined in \cite{colbrook_2019}, for example, the evaluation functions would return the matrix elements of the operator in a certain basis, for example. We would like to stress that the choice of evaluation functions $f_i \in \Lambda$ can be decisive for the solvability or unsolvability of a computational problem.

Computational problems can be solved by algorithms. The solvability of course depends on what kinds of computations one allows the algorithm to perform (for example, whether only algebraic or more general computations are allowed) \cite{benartzi_2020}. Unless otherwise noted, in the following we will understand computability to refer to algorithms that can be executed by BSS machines \cite{blum_1989,blum_1998}, a classical framework for computations with real numbers. In Appendix~\ref{appendixBSS}, we describe our model of computation in more detail, including what it means for the BSS algorithm to sequentially access information about the computational problem via the evaluation functions $f_i \in \Lambda$.

In this paper, we consider the spectral and pseudospectral problem with the additional structure of finite local complexity (\textit{flc}). We can thus circumvent the impossibility result of \cite{colbrook_2019} by using a restricted set $\Omega$. But the resulting class of \textit{flc} operators is still very general, and can accomodate many if not most physical situations, including all examples from \cite{colbrook_2019}. We show that the spectral problem becomes computable when considering operators of finite local complexity.
In addition to restricting the problem set $\Omega$, the set of evaluation functions $\Lambda$ must be augmented in order to allow the algorithm to make use of the \textit{flc} structure.

\begin{definition}
\label{definitionUniformlyDiscrete}
 A subset $\Gamma \subseteq \Rn$ is called \textit{uniformly discrete} if there exists a constant $q > 0$, the \textit{packing radius}, such that $d(x,y)  \geq q$ for all $x, y \in \Gamma$ with $x\neq y$.
\end{definition}

In the above definition, and for the rest of this paper, we define the distance $d(x,y)$ on $\Rn$ as the {\em maximum distance}
\begin{align}\label{DefMaxDist}
d((x_1, \dots, x_n), (y_1, \dots, y_n)) = \max_{k = 1, \dots, n} |x_k - y_k|\,.
\end{align}
 We also define $B_r(x)$, for $x \in \Rn$ and $r > 0$, as the open ball using this distance; that is, the set $B_r(x)$ is a hypercube with side length $2r$ centered at $x$.

\begin{definition}
\label{definition-discrete-operator}
We define a \textit{discrete operator} $H$ in dimension $n\in\mathbb{N}$ as a bounded operator on a separable Hilbert space $\mathcal H$, together with an orthonormal basis $(e_i)_{i \in \Gamma}$ indexed by a uniformly discrete subset $\Gamma \subset \Rn$. We define the \textit{matrix elements} at points $x, y \in \Gamma$ as $H_{xy} = \langle e_x, H e_y\rangle$.
\end{definition}

In the following, we will always represent discrete operators $H$ with respect to the special basis  $(e_i)_{i \in \Gamma}$
and use the basis isomorphism to identify $\mathcal{H}$ with $\ell^2(\Gamma)$. Furthermore, for any $x\in\mathbb{R}^n$ and $L>0$, the finite dimensional subspace $\mathcal{H}_{B_L(x)}\subset \mathcal{H}$
is defined by $\mathcal{H}_{B_L(x)} := \mathrm{span}\{ e_x\,|\, x\in B_L(x)\}$, and the orthogonal projection onto $\mathcal{H}_{B_L(x)}$ is denoted by $\mathbf{1}_{B_L(x)}$.
\smallskip

We now define our two main conditions on $H$, short-range and finite local complexity.

\begin{definition}
\label{definition-short-range}
Let $H$ be a discrete operator in dimension $n$. Then $H$ is called \textit{short-range} if there exist $C$ and $\varepsilon>0$ such that
\begin{align*}
 |H_{xy}| \leq C \,d(x,y)^{- (n+\varepsilon)}\,.
\end{align*}
for all $x, y \in \Gamma$. $H$ is said to have \textit{finite range} if there is a number $m > 0$ (the maximal hopping length) such that $H_{xy} = 0$ whenever $d(x,y) > m$.
\end{definition}

The condition of finite local complexity is usually defined for point sets \cite{lagarias_1996,lagarias_1999a,lagarias_1999,besbesFLC}. Very succintly, a uniformly discrete set $\Gamma \subset \mathbb{R}^n$ is defined to have \textit{flc} iff $\Gamma - \Gamma$ is discrete. This turns out to be equivalent to the set of finite patches $\{\Gamma \cap B_L(x)\,|\,x \in \Gamma\}$, falling into finitely many equivalence classes under translation \cite{besbesFLC} for each $L>0$.  To extend this concept to operators, we require that there are finitely many equivalence classes on which, additionally, the operator $H$ acts in the same way, which we define precisely as follows.

\begin{definition}
\label{def:EquivalentAction}
 A discrete operator $H$ is said to have \textit{equivalent action} on two subsets $A, B \subseteq \Gamma$ if there is a $t \in \Rn$ such that $B = t + A$ and if there exist $U(z) \in S^1 \subseteq \mathbb C$ for every $z \in A$ such that for any $a_1, a_2 \in A$ we have
 \begin{align*}
  H_{b_1 b_2} = U(a_1) H_{a_1 a_2} U(a_2)^*\,,
 \end{align*}
 where $b_1 = a_1 + t, b_2 = a_2 + t$.
\end{definition}

\begin{remark}
It is clear that for any operator $H$, equivalent action of $H$ defines an equivalence relation on subsets of $\Gamma$. The complex phases $U(a_1)$ and $U(a_2)$ can often be set to unity, but they are necessary as gauge transformations for certain operators, in particular for discrete Schrödinger operators with magnetic fields.
\end{remark}

\begin{definition}
\label{definition-finite-local-complexity}
 A discrete operator $H$ is said to have \textit{finite local complexity} if
 for any $L > 0$, the set $\{ \,\Gamma \cap B_L(x) \,|\, x \in \Rn \,\}$ is contained in finitely many equivalence classes with respect to equivalent action of $H$.
\end{definition}

\begin{remark}
More explicitly, $H$ has finite local complexity if for any $L>0$ there are finitely many $x_1, \dots, x_s \in \Rn$ such that for any $y \in \Rn$, there is a $k \in \{1, \dots, s\}$ such that $H$ has equivalent action on $\Gamma \cap B_L(y)$ and $\Gamma \cap B_L(x_k)$.
\end{remark}

We now define the spectral and pseudospectral problems for operators of finite local complexity. The main goal of this paper is to show that these problems are solvable with error control. We will  consider the spectral problem only for normal operators. The spectral problem for non-normal operators is intractable because the spectrum of non-normal operators is not Hausdorff-continuous in the matrix entries, even for \textit{flc} operators \cite{trefethenEmbreeBook}. Instead, we show the that the $\varepsilon$-pseudospectrum is computable for non-normal operators, for all $\varepsilon > 0$.

To define the spectral problem for operators of finite local complexity, some work has to be done to define suitable evaluation functions which will allow the algorithm to make use of the \textit{flc} structure. There are multiple ways to do this; for example, it would be sufficient to give the algorithm access to the \textit{repetitivity function}, which gives a radius in which all patches of size $L$ occur \cite{besbesFLC}. However, to stay closer to the way actual implementations are likely to operate, we here define the evaluation so as to allow the algorithm to enumerate all local patches of a given size and to access the matrix elements and site locations for any given patch. It is expected that for concrete applications, special-purpose algorithms for the enumeration of local patches will be used. We have demonstrated this for the case of cut-and-project quasicrystals in \cite{hege_2022,hegeThesis}. Because the exact definition of the evaluation functions is a bit tedious, it is given in Appendix~\ref{appendixEvaluationFunctions}. We now define the \textit{flc} version of the spectral computational problem as follows.

\begin{definition}
\label{definitionSpectralProblem}
 The \emph{flc spectral problem} is the computational problem   $( \Omega, \Lambda, (\mathcal M, d_H), \Xi)$, where
 \begin{itemize}
  \item $\Omega$ is the set of normal discrete operators with finite local complexity and short-range;
  \item $\Lambda$ is a family of functions $(f_i)_{i \in \mathcal I}$ satisfying     Conditions~\ref{definitionEvaluationFunctions} in Appendix~\ref{appendixEvaluationFunctions};
  \item $\mathcal M$ comprises all compact subsets of $\mathbb{C}$, and $d_H$ is the Hausdorff distance;
  \item $\Xi$ is the function which assigns to every operator $H$ its spectrum, $\Xi(H) = \Spec(H)$.
 \end{itemize}
\end{definition}

\noindent
One of the main results in this paper is that the \textit{flc} spectral problem is solvable with error control in Hausdorff distance.

\begin{theorem}
\label{theoremComputabilitySpectrum}
Let $(\Omega, \Lambda, (\mathcal M, d_H), \Xi)$ be the \emph{flc} spectral problem. Then for every $k \in \mathbb{N}$ there exists a Blum-Shub-Smale (BSS) algorithm $\Gamma_k : \Omega \to \mathcal M$, using the family of evaluation functions~$\Lambda$, such that
\begin{align*}
d_H(\Gamma_k(H), \Xi(H)) \leq 2^{-k}
\end{align*}
for all $H \in \Omega$.
\end{theorem}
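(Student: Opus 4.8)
The plan is to combine the two-sided bounds on the lower norm function with the finite-local-complexity structure, reducing the infinite infimum over centers to a finite computation, and then to run a bisection-type search over a grid in $\mathbb{C}$.

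First I would set up the algorithm's parameters. Fix $k$ and choose a patch size $L = L(k)$ large enough that the error terms in Theorem~\ref{introThm} are controlled: since $\delta_L = n/\lfloor L/m \rfloor \to 0$, and $\|H - \lambda\|$ is bounded on the relevant region (using the short-range bound and a crude a priori estimate on $\|H\|$ from the evaluation functions), we can make $\varepsilon_{L,\lambda}\sqrt{1-\delta_L} - \|H-\lambda\|\sqrt{\delta_L}$ differ from $\varepsilon_{L,\lambda}$ by at most, say, $2^{-k-2}$ uniformly. Strictly speaking $H$ is only short-range, not finite range, so I would first approximate $H$ by its finite-range truncation $H^{(m)}$ obtained by setting $H_{xy} = 0$ for $d(x,y) > m$; the short-range decay gives $\|H - H^{(m)}\| \leq C' m^{-\varepsilon}$ explicitly, and since the spectrum of a normal operator is $1$-Lipschitz in operator norm (Hausdorff distance), choosing $m = m(k)$ large absorbs this into the budget. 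Then Theorem~\ref{upperBoundTheorem} (the $\rho_H(\lambda) \le \varepsilon_{L,\lambda,x}$ bound) and Theorem~\ref{introThm} together give, for the truncated operator, a computable quantity $\varepsilon_{L,\lambda}$ with $|\rho_H(\lambda) - \varepsilon_{L,\lambda}| \leq 2^{-k-2}$ for all $\lambda$, and for normal $H$ this is $|d(\lambda, \Spec(H)) - \varepsilon_{L,\lambda}|$.

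The key structural step is that $\varepsilon_{L,\lambda} = \inf_{x\in\mathbb{R}^n} \varepsilon_{L,\lambda,x}$ is actually a minimum over finitely many centers. By finite local complexity, for the chosen $L + m$ there are finitely many patches $\Gamma \cap B_{L+m}(x_1), \dots, \Gamma \cap B_{L+m}(x_s)$ up to equivalent action, and the singular value $\varepsilon_{L,\lambda,x}$ of the rectangular block $(H_{yz})_{y \in B_{L+m}(x), z \in B_L(x)}$ depends only on the equivalence class of the patch (the unitary phases $U(z) \in S^1$ conjugate the block by diagonal unitaries, leaving singular values invariant). So $\varepsilon_{L,\lambda} = \min_{k=1,\dots,s} \varepsilon_{L,\lambda,x_k}$, and the evaluation functions of Definition~\ref{definitionSpectralProblem} (Appendix~\ref{appendixEvaluationFunctions}) let the algorithm enumerate these $s$ patches, read off their matrix elements, and compute each smallest singular value — a finite-dimensional SVD, executable on a BSS machine up to any precision. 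This yields a routine computing $\varepsilon_{L,\lambda}$ to accuracy $2^{-k-2}$, hence $d(\lambda, \Spec(H))$ to accuracy $2^{-k-1}$, for any queried $\lambda \in \mathbb{C}$.

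Finally I would assemble $\Gamma_k(H)$: take a grid $G$ of mesh $\leq 2^{-k-1}$ over a large enough square $[-R, R]^2 \supseteq \Spec(H)$ (with $R$ a computable a priori bound on $\|H\|$), evaluate the approximation to $d(\lambda, \Spec(H))$ at each grid point $\lambda \in G$, and keep those with computed value below $2^{-k-1} + 2^{-k-1} = 2^{-k}$ — equivalently, output $\Gamma_k(H) = \{\lambda \in G : \tilde d(\lambda) \le 2^{-k}\}$ for the computed estimate $\tilde d$. One then checks both inclusions for the Hausdorff bound: every point of $\Spec(H)$ lies within $2^{-k-1}$ of a grid point $\lambda$, at which the true distance is $\le 2^{-k-1}$ and the estimate $\le 2^{-k}$, so $\lambda \in \Gamma_k(H)$; conversely any $\lambda \in \Gamma_k(H)$ has true distance to $\Spec(H)$ at most $2^{-k} + 2^{-k-1} \le 2^{-k} \cdot \tfrac{3}{2}$, which with slightly sharper bookkeeping of the constants (splitting the $2^{-k}$ budget as, say, $2^{-k-3}$ per source of error) gives the clean $2^{-k}$. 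The main obstacle I expect is not any single estimate but the careful allocation of the error budget across the four independent approximations — finite-range truncation, the $\delta_L$ correction, the numerical SVD, and the grid mesh — together with making every constant ($m(k)$, $L(k)$, $R$, grid size) explicitly computable from the evaluation functions so that the whole procedure is genuinely a BSS algorithm rather than merely a convergent scheme.
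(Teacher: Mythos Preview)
Your proposal is correct and follows essentially the same route as the paper: reduce to finite range via the short-range decay, combine the upper bound~\eqref{colbrookUpperBound} with Theorem~\ref{introThm} to approximate $\rho_H(\lambda)$ with explicit error, use \textit{flc} to turn the infimum over centers into a finite minimum over patch classes (your remark that the diagonal phases $U(z)$ leave singular values invariant is exactly what is needed here), and then threshold on a grid. The paper organizes the middle steps into an intermediate statement (Corollary~\ref{approximability}) and is slightly tighter in allocating the error budget (grid mesh $\tau/4$, $\rho$-accuracy $\tau/4$, threshold $\tau/2$), which is precisely the ``sharper bookkeeping'' you anticipated.
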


\noindent
The proof of this Theorem is given in Sections~\ref{sec-computability-lower-norm} and~\ref{sec-computability-spectrum}.

\subsection{The pseudospectrum}\label{SecPseudo}

For non-normal operators, we cannot compute the spectrum itself, but it is still possible to compute the $\varepsilon$-pseudospectrum for $\varepsilon > 0$. The pseudospectrum may be defined in terms of the \textit{lower norm function} $\rho_H : \mathbb{C} \to \mathbb{R}_+$ \cite{coreIssue}, which is defined as
\begin{align}
 \rho_H(\lambda) = \begin{cases} \big\lVert (H - \lambda)^{-1} \big\rVert^{-1} &\text{for }\lambda \notin \Spec(H) \\
          0  &\text{otherwise}\,.
           \end{cases}
\end{align}
The lower norm function is not a norm, but it can be given a variational definition that is similar to the operator norm:
\begin{align}
\label{eqRhoVariational}
\rho_H(\lambda) = \inf_{\psi \in \mathcal{H}\setminus\{0\}} \frac{\norm{(H-\lambda) \psi}}{\norm{\psi}}\,.
\end{align}
Moreover, $\rho_H$ is Lipschitz continuous with Lipschitz constant~$1$ (see, for example, \cite{noteOnHausdorff}, Lemma 2.1).
The $\varepsilon$-pseudospectrum is defined as the closed $\varepsilon$-sublevel set of~$\rho_H$:
\begin{align}\label{DefPseudo}
\Spec_\varepsilon(H) := \{ z \in \mathbb{C} \;|\; \rho_H(z) \leq \varepsilon \}\,.
\end{align}
Some authors also define the pseudospectrum as the open set $\Spec_\varepsilon^\circ(H) = \{ z \in \mathbb{C} \;|\; \rho_H(z) < \varepsilon \}$, using a strict inequality. This has no influence on the computability, however, because $\Spec_\varepsilon(H)$ is the closure of $\Spec_\varepsilon^\circ(H)$ \cite{trefethenEmbreeBook}, and thus $\Spec_\varepsilon(H)$ and $\Spec_\varepsilon^\circ(H)$ have Hausdorff distance zero. Consequently, any algorithm computing $\Spec_\varepsilon(H)$ with error control, according to Equation \eqref{eqIntroHausdorffConvPseudo}, also computes $\Spec_\varepsilon^\circ(H)$ in this sense and vice versa.

For normal operators, we have
\begin{align}
\label{rhoForNormal}
\rho_H(\lambda) = d(\lambda, \Spec(H))\,,
\end{align}
thus the $\varepsilon$-pseudospectrum of a normal operator $H$ is just the $\varepsilon$-fattenig of its spectrum. For non-normal operators, we still have
\begin{align*}
\rho_H(\lambda) \leq d(\lambda, \Spec(H))\,.
\end{align*}
Thus the $\varepsilon$-pseudospectrum always contains the $\varepsilon$-fattening of $\Spec(H)$. A computational problem for the $\varepsilon$-pseudospectrum can be defined similarly as for the spectrum:

\newcommand{\ps}{{\rm ps}}
\begin{definition}
\label{definitionPseudospectralProblem}
 The \emph{flc pseudospectral problem} is the computational problem  $(\Omega_\ps, \Lambda_\ps, (\mathcal M, d_H), \Xi_\ps)$, where
 \begin{itemize}
  \item $\Omega_\ps$ is the set of all tuples $(H, \varepsilon)$, where $H$ is a discrete operator with finite local complexity and short-range (but not necessarily normal), and $\varepsilon > 0$;
  \item $\Lambda_\ps$ consists of the same evaulation functions of Definition~\ref{definitionSpectralProblem}, applied to $H$, and one additional evaluation function which returns $\varepsilon$;
  \item $(\mathcal M, d_H)$ is as in Definition~\ref{definitionSpectralProblem};
  \item $\Xi_\ps(H, \varepsilon) = \Spec_\varepsilon(H)$.
 \end{itemize}
\end{definition}

The following theorem states that the $\varepsilon$-pseudospectrum, for $\varepsilon > 0$, is computable with error control, just like the spectrum for normal operators.

\begin{theorem}
 \label{theoremComputabilityPseudospectrum}
Let $(\Omega_\ps, \Lambda_\ps, (\mathcal M, d), \Xi_\ps)$ be the flc pseudospectral problem. Then for every $k \in \mathbb{N}$ there exists a Blum-Shub-Smale (BSS) algorithm $\Gamma_k : \Omega_\ps \to \mathcal M$, using the family of evaluation functions $\Lambda_{ps}$, such that
\begin{align*}
 d(\Gamma_k(H, \varepsilon), \Xi_\ps(H, \varepsilon)) \leq 2^{-k}
\end{align*}
for all $H \in \Omega_\ps$.
\end{theorem}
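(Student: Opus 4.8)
The plan is to split the set-valued problem into (i) two-sided error control on the scalar lower-norm function $\rho_H$ evaluated at individual points $\lambda$, which is where Theorem~\ref{introThm} and finite local complexity do the work, and (ii) an adaptive comparison of an inner and an outer approximation of $\Spec_\varepsilon(H)$ that upgrades (i) to Hausdorff control on the set; normality is nowhere used. As preliminaries, $\Spec_\varepsilon(H)$ is compact — a sublevel set of the continuous, coercive function $\rho_H$ — and non-empty, since it contains $\Spec(H)$. From \eqref{eqRhoVariational} one has $\rho_H(\lambda)\ge|\lambda|-\|H\|$, so $\Spec_\varepsilon(H)\subseteq\{\lambda:|\lambda|\le\|H\|+\varepsilon\}$, and a bound $R\ge\|H\|$ is computable from the data: $\sup_x\sum_y|H_{xy}|$ splits into a finite sum over the patch $\Gamma\cap B_{m_0}(x)$, whose value ranges over finitely many enumerable possibilities by finite local complexity, plus a tail $\le C\sum_{d(x,y)>m_0}d(x,y)^{-(n+\varepsilon)}$ bounded explicitly using the uniform discreteness of $\Gamma$; a Schur estimate then bounds $\|H\|$. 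We work on the finite grids $G_N:=h_N(\mathbb Z+\mathrm i\mathbb Z)\cap\{\lambda:|\lambda|\le R+\varepsilon+1\}$ with mesh $h_N\downarrow 0$.

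The heart of the argument is a subroutine returning, for given $\lambda$ and target precision, a short interval containing $\rho_H(\lambda)$. Truncate $H$ to the finite-range operator $H^{(m)}$ with entries $H_{xy}\mathbf 1_{d(x,y)\le m}$; the short-range bound and a Schur estimate on the tail give a computable $\eta(m)\ge\|H-H^{(m)}\|$ with $\eta(m)\to 0$, whence $|\rho_H(\lambda)-\rho_{H^{(m)}}(\lambda)|\le\eta(m)$ by the $1$-Lipschitz dependence of $\rho$ on the operator (immediate from \eqref{eqRhoVariational}), and $H^{(m)}$ inherits finite local complexity with the same patches, so the evaluation functions of Definition~\ref{definitionSpectralProblem} apply verbatim. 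Writing $\varepsilon^{(m)}_{L,\lambda,x}$ for the quantity $\varepsilon_{L,\lambda,x}$ of Theorem~\ref{introThm} associated to $H^{(m)}$ and $\varepsilon^{(m)}_{L,\lambda}:=\inf_{x\in\Rn}\varepsilon^{(m)}_{L,\lambda,x}$, the bound \eqref{colbrookUpperBound} gives $\rho_{H^{(m)}}(\lambda)\le\varepsilon^{(m)}_{L,\lambda}$, while Theorem~\ref{introThm} applied to $H^{(m)}$ gives, for $L>m$,
\begin{align*}
\rho_{H^{(m)}}(\lambda)\ \ge\ \varepsilon^{(m)}_{L,\lambda}\,\sqrt{1-\tfrac{n}{\lfloor L/m\rfloor}}\ -\ \|H^{(m)}-\lambda\|\,\sqrt{\tfrac{n}{\lfloor L/m\rfloor}}\,.
\end{align*}
Since the smallest singular value of a submatrix of $H^{(m)}-\lambda$ is at most $\|H^{(m)}-\lambda\|\le R+\eta(m)+|\lambda|$, the width of this sandwich is $O\big((R+|\lambda|)\sqrt{n/\lfloor L/m\rfloor}\big)\to 0$ as $L\to\infty$, uniformly over $G_N$. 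Finally $\varepsilon^{(m)}_{L,\lambda}$ is itself computable: by finite local complexity the patches $\{\Gamma\cap B_{L+m}(x):x\in\Rn\}$ fall into finitely many classes under equivalent action of $H$, a complete list of representatives $x_1,\dots,x_s$ is obtained by enumerating patches with the evaluation functions, and, because $H^{(m)}$ has range $m$, the rectangular matrix $(H^{(m)}_{yz}-\lambda\delta_{yz})_{y\in B_{L+m}(x),\,z\in B_L(x)}$ depends — up to the diagonal unitary conjugation by the gauge phases $U(\cdot)\in S^1$, which leaves singular values unchanged — only on the class of $\Gamma\cap B_{L+m}(x)$; hence $\varepsilon^{(m)}_{L,\lambda}=\min_{i=1,\dots,s}\varepsilon^{(m)}_{L,\lambda,x_i}$, a finite minimum of smallest singular values of explicit finite matrices, each computable to any precision by a BSS machine (for instance by bisection on the characteristic polynomial, whose coefficients are exact polynomials in the known entries). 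Combining these three ingredients with suitable internal precisions yields, for each $\lambda\in G_N$, a number $u_N(\lambda)$ with $\rho_H(\lambda)\le u_N(\lambda)\le\rho_H(\lambda)+\Delta_N$, known to within $\mu_N$, where $\Delta_N,\mu_N\downarrow 0$ under refinement of $(m,L)$ and the internal precision.

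At stage $N$ one forms computable compact sets $\underline S_N\subseteq\Spec_\varepsilon(H)\subseteq\overline S_N$, both finite unions of $h_N$-squares: $\overline S_N$ collects the $h_N$-square around every $\lambda\in G_N$ not certified to satisfy $\rho_H(\lambda)>\varepsilon+h_N$ (that is, with $u_N^{-}(\lambda)\le\varepsilon+h_N+\Delta_N$, where $[u_N^{-}(\lambda),u_N^{+}(\lambda)]\ni u_N(\lambda)$ has width $\le\mu_N$), which by the $1$-Lipschitz property of $\rho_H$ covers $\Spec_\varepsilon(H)$ and lies inside $B_{h_N}\big(\Spec_{\varepsilon+h_N+\Delta_N+\mu_N}(H)\big)$; dually $\underline S_N$ collects the $h_N$-square around every $\lambda\in G_N$ with $u_N^{+}(\lambda)\le\varepsilon-h_N$, so that $\underline S_N\subseteq\Spec_\varepsilon(H)$ while $\underline S_N$ contains the $h_N$-square of every $\lambda\in G_N$ with $\rho_H(\lambda)\le\varepsilon-h_N-\Delta_N-\mu_N$. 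Since $\Spec_{\varepsilon+t}(H)\downarrow\Spec_\varepsilon(H)$ as $t\downarrow 0$ (nested compacts) we get $d_H(\overline S_N,\Spec_\varepsilon(H))\to 0$ unconditionally, and since $\Spec_\varepsilon(H)=\overline{\Spec_\varepsilon^\circ(H)}$ the inner sets $\Spec_{\varepsilon-t}(H)$ become, for every $\eta>0$, $\eta$-dense in $\Spec_\varepsilon(H)$ once $t$ is small enough, which together with $h_N\to 0$ gives $d_H(\underline S_N,\Spec_\varepsilon(H))\to 0$; hence $d_H(\underline S_N,\overline S_N)\to 0$. The algorithm $\Gamma_k$ therefore enlarges $N$ until $d_H(\underline S_N,\overline S_N)\le 2^{-k}$ — a decidable condition on two finite unions of squares — and outputs $\Gamma_k(H,\varepsilon):=\overline S_N$; it halts, and on halting $d_H(\overline S_N,\Spec_\varepsilon(H))\le d_H(\overline S_N,\underline S_N)\le 2^{-k}$ because $\underline S_N\subseteq\Spec_\varepsilon(H)\subseteq\overline S_N$. (The normal spectral problem of Theorem~\ref{theoremComputabilitySpectrum} is handled by the same $\rho_H$-subroutine and is simpler: since $\rho_H(\lambda)=d(\lambda,\Spec(H))$ admits no plateau, thresholding the computed values of $\rho_H$ on a fine grid at level $\approx 2^{-k}$ yields a set within explicit Hausdorff distance $2^{-k}$ of $\Spec(H)$, with no adaptive search required.)

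I expect two points to carry the real weight. The first is internal to the $\rho_H$-subroutine: it genuinely needs the \emph{lower} bound of Theorem~\ref{introThm} — the one-sided bound \eqref{colbrookUpperBound} alone cannot be complemented from below when only matrix entries are available — and it needs finite local complexity precisely to turn the infimum over all centres $x\in\Rn$ into a finite minimum, the delicate bookkeeping being the check that the gauge phases $U(\cdot)$ and the range-$m$ support structure really do make $\varepsilon^{(m)}_{L,\lambda,x}$ a function of the equivalence class. The second is that $t\mapsto\Spec_{\varepsilon+t}(H)$ is only right-continuous, with no \emph{a priori} modulus, so no fixed choice of truncation scale can be certified correct in advance; this is exactly why the output is produced by the adaptive comparison of $\underline S_N$ and $\overline S_N$ rather than by a precomputed budget, and the termination of that loop is where $\Spec_\varepsilon(H)=\overline{\Spec_\varepsilon^\circ(H)}$ is essential, as it is what guarantees that the inner approximation eventually catches up to $\Spec_\varepsilon(H)$.
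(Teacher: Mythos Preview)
Your proof is correct and follows the same route as the paper: two-sided error control on $\rho_H(\lambda)$ via Theorem~\ref{introThm} together with finite local complexity to turn the infimum over centres into a finite minimum (this is the paper's Corollary~\ref{approximability}), followed by an adaptive grid refinement whose termination rests on the Hausdorff continuity of $\varepsilon\mapsto\Spec_\varepsilon(H)$ (the paper isolates this as Lemma~\ref{lemmaPreTau}, proved from the maximum principle for $\rho_H^{-1}$; you invoke the equivalent fact $\Spec_\varepsilon(H)=\overline{\Spec_\varepsilon^\circ(H)}$). The only cosmetic differences are that the paper separates the short-range-to-finite-range reduction into its own adaptive loop (Proposition~\ref{prop:FRreductionPS}) rather than folding the truncation error $\eta(m)$ into the per-point budget, and formulates the grid step with three sets $S_\tau,R_\tau,U_\tau$ and stopping rule ``$U_\tau$ close to $S_\tau$'' (Proposition~\ref{propSRU}) instead of your two-set sandwich $\underline S_N\subseteq\Spec_\varepsilon(H)\subseteq\overline S_N$ with stopping rule $d_H(\underline S_N,\overline S_N)\le 2^{-k}$.
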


\noindent
The proof of Theorem~\ref{theoremComputabilityPseudospectrum} is given in Section~\ref{sec-computability-pseudospectrum}.

\begin{figure}
\begin{center}
\includegraphics[width=0.84\linewidth]{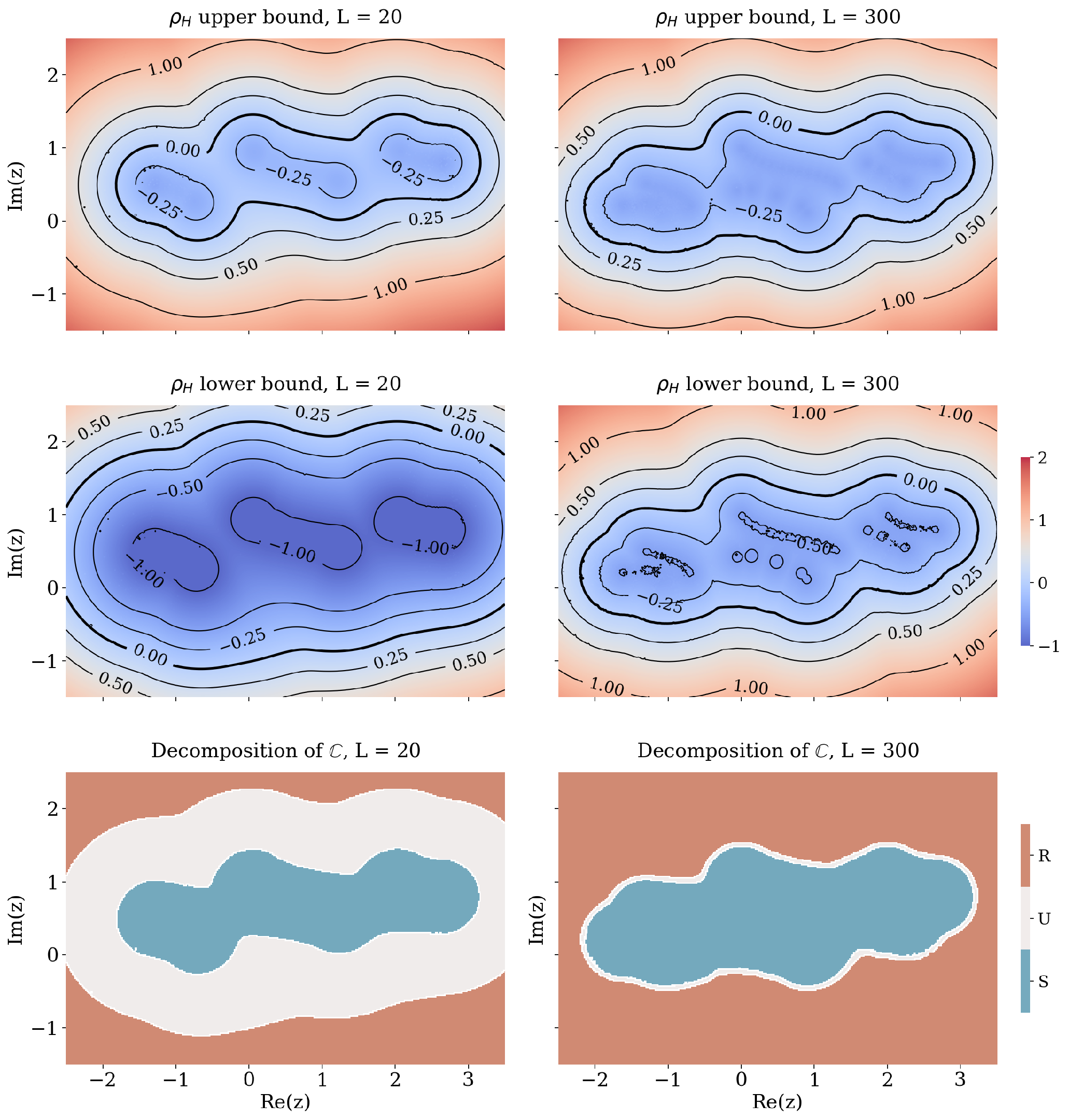}
\end{center}
\captionsetup{width=0.8\linewidth}
\caption{Exact computation of the $\varepsilon$-pseudospectrum of a non-Hermitian Hamiltonian with a cut-and-project potential for $\varepsilon = 0.5$. The Hamiltonian is defined by $H\psi(n) = -\psi(n-1) + V(n)\psi(n) - \psi(n+1)$. The chosen potential has the form $V(n) = (1 + i)\mathbf{1}(\alpha n < 1 / \alpha)$, where we chose $\alpha = 1.66$ (For $\alpha = (1 + \sqrt 5) / 2$, this construction gives the Fibonacci quasicrystal, but we chose $\alpha = 1.66$ because it creates a less uniform potential leading to a slower convergence and thus a more pronounced effect of increasing $L$ in the pictures.). The left and right column show the same computation with $L = 20$ and $L = 300$, respectively. The uppermost row shows the lower, spectral gap bound on $\rho_H$, while the row below shows the upper bound on $\rho_H$ from \cite{colbrook_2019}. If the lower bound is positive, the associated point is known   to be in the complement of  the pseudospectrum; if the upper bound is negative, the point is known to be inside the pseudospectrum. This gives a decomposition of the plane into three sets $R, U,$ and $S$ of points, where it is known that $S \subseteq \Spec_\varepsilon(H)$, that $R \cap \Spec_\varepsilon(H) = \varnothing$, and no statement can be made about $U$. This is very similar to the sets $S_\tau$, $R_\tau$ and $U_\tau$ in Section~\ref{sec-computability-pseudospectrum}, except that here we have fixed $L$ instead of $\tau$ and we do not vary the spacing of the grid on which $\tilde\rho_H(\lambda, \tau)$ is evaluated.\label{figUpperLowerBounds}}
\end{figure}

\begin{remark}
While this paper focusses on a theoretical computability result, the described algorithms also constitute a practical method for computating the (pseudo-) spectrum in systems of finite local complexity. In \cite{hege_2022}, we have already described an algorithm to prove the existence of spectral gaps using an edge state criterion on finite patches. For this we used Dirichlet boundary conditions and computed an edge state criterion to provably avoid spectral pollution. While this method could be used to prove the computability of the spectrum \cite{hegeThesis}, here we instead use the method of uneven sections \cite{colbrook_2019}, which enables us to give a unified treatment of the computation of the spectrum and pseudospectrum. Although it is not as efficient as the Dirichlet-based method of \cite{hege_2022} for higher-dimensional system, we can use this approach to compute gap bounds for the $\varepsilon$-pseudospectrum. Figure~\ref{figUpperLowerBounds} shows a computation of the spectral inclusion and spectral gap bounds for the pseudospectrum of a non-Hermitian Hamiltonian with Fibonacci-like potential \cite{finiteSectionsPeriodic,damanikFibonacci,jagannathan_2021}.
\end{remark}

Our proof of Theorems~\ref{theoremComputabilitySpectrum} and~\ref{theoremComputabilityPseudospectrum} is based on the spectral detectability result, Theorem~\ref{introThmNew}. We have formulated Theorem~\ref{introThmNew} for finite-range as opposed to short-range operators because this simplifies the statement considerably.
In Section~\ref{sec-finite-range}, we show that the computational spectral and pseudospectral problems for short-range  \textit{flc} operators can be reduced to that for finite range operators.
In Section~\ref{sec-upper-bound}, we recall the upper bound $\varepsilon_{L, \lambda, x}$ on $\rho_H(\lambda)$ from \cite{colbrook_2019}, adapted to our setting and finite range interactions. In Section~\ref{sec-quasimode-detectability}, we prove the corresponding lower bound, which is given by Theorem~\ref{introThmNew}. In Section~\ref{sec-computability-lower-norm}, we apply Theorem~\ref{introThmNew} to show that $\rho_H(\lambda)$ is computable at every $\lambda \in \mathbb{C}$. In Section~\ref{sec-computability-spectrum}, we show how to use the computability of $\rho_H(\lambda)$ to show the computability of the spectrum (Theorem~\ref{theoremComputabilitySpectrum}). The computability of the $\varepsilon$-pseudospectrum (Theorem~\ref{theoremComputabilityPseudospectrum}) is shown in Section~\ref{sec-computability-pseudospectrum}.

\subsection{Examples}
\label{subsec-examples}

Infinite-volume operators which are discrete and of finite local complexity, as described in Definitions~\ref{definition-discrete-operator}, \ref{definition-short-range}, and \ref{definition-finite-local-complexity}, occur frequently in physics.
For example, a so-called discrete Schrödinger operator on $\ell^2(\mathbb{Z}^n)$ \cite{katsura_1950,thouless_1974,levi_1979} is of the form
\begin{align*}
(H\psi)(x) = (2n + V(x))\psi(x) \;- \sum_{\substack{y \in \mathbb{Z}^n \\ d(x,y) = 1}} \psi(y)
\end{align*}
and can be obtained as the discretization of a Schrödinger operator on $\mathbb{R}^n$. Here the function $V:\mathbb{Z}^n\to \mathbb{R}$ is called the potential. Despite its simple structure, the spectral problem for  discrete Schrödinger operators is an area of active research \cite{embree2019spectra,rabinovich2006essential,liu2024fermi}.

The spectrum of a discrete Schrödinger operator with  periodic potential $V(x)$ can be computed  using the Bloch-Floquet transform. In this case, the spectrum consists of finitely many intervals  separated by so called  gaps. A more general class of operators is obtained when $V(x)$ is defined via a substitution rule. Such operators are of high interest in mathematical physics due to their connection to quasicrystals. For example, $V(x)$ may be the Fibonacci potential \cite{kohmoto_1983,damanikFibonacci,jagannathan_2021} or the Thue-Morse sequence \cite{bellissard1990spectral}. Important mathematical questions about these operators concern the nature and fractal dimension of their spectrum \cite{suto_1987,damanik2008fractal} and the gap labelling \cite{bellissard_1992,kellendonk_1994}.

Despite many results on the fractal nature of the spectrum, for numerical investigation the spectrum is usually approximated by reverting to periodic approximants \cite{damanik2015spectral}. While good error bounds on the spectrum of such approximations exist \cite{beckus_2018,beckusTakase} and can be computed in concrete cases, their application requires specialized analysis to find suitable periodic approximations and compute the constants in the bound \cite{beckus_personal_communication}. In contrast, our method only requires the enumeration of local patches of a given size, and therefore provides the first immediately applicable algorithm to compute the spectrum of discrete Schrödinger operators with potentials given by substitution rules.

The Schrödinger operator can also be discretized when a magnetic field is present. In the case of a constant magnetic field in a two-dimensional square lattice, this leads to the Hofstadter model \cite{Hofstadter1976,satija_2016}, which is famous for the self-similar, fractal structure of its spectrum as a function of the magnetic field (Hofstadter butterfly). The spectrum of the Hofstater model is usually approximated using periodic approximations; however, this is only possible for rational fluxes \cite{Hofstadter1976}. For general fluxes, our method is to our knowledge the first that allows computing the spectrum with error control to arbitrary precision in the infinite Hofstadter model.

The Hofstadter model has also been investigated on quasiperiodic lattices like Ammann-Beenker and Rauzy tilings, which lead to a butterfly structure that depends on the quasiperiodicity \cite{kohmoto_1983,fuchs_vidal_butterfly,jagannathan_2014,fulga_aperiodic_2016,fuchs_2018}. Our method is the first to allow computing the spectrum of such systems with error control, which we have also implemented for concrete examples in  \cite{hege_2022}.

A different generalization is to consider  Schrödinger operators on arbitrary graphs \cite{sy1992discrete,korotyaev2014schrodinger}. Because we do not presuppose that our operators are defined on a grid, or the existence of a groupoid structure as in \cite{beckusTakase}, our algorithm can be readily applied to any Schrödinger operator on arbitrary geometrically embedded infinite graphs as long as their local structure is known.

\subsection{Random operators}

Our method can also be applied to random operators, which play an important role in physics as models of noise and impurities \cite{anderson_1958,halperin_1967,aizenman_warzel}. For example, choosing $\Gamma = \mathbb{Z}^n$, a random operator may be defined as a weakly measurable function defined on a probability space $(\Omega, \mathcal A, \mathbb{P})$ which assigns to every $\omega \in \Omega$ a bounded operator $H_\omega$ acting on the common Hilbert space $\mathcal{H}$. If the action $T$ of the translation group $\mathbb{Z}^n$ on $\Omega$ is measure preserving and ergodic, and such that $H_{T_x \omega}$ is unitarily equivalent to $H_\omega$ for every $x \in \mathbb{Z}^n$, then it can be shown that the spectrum $\Spec(H_\omega)$ is almost surely constant \cite{aizenman_warzel}.

For ergodic random operators, the almost sure spectrum can be computed using our method by enumerating all local patches that occur in $H_\omega$ with  non-zero proabability. The ergodicity implies that all such patches occur with probability one. We can thus compute the almost sure spectrum of random operators in a completely deterministic manner, without sampling random potentials, by enumerating all possible local patches. This procedure would have to be optimized for practical use however, because the number of local patches grows exponentially in  $L$ for most random models.

\section{Reduction to finite range operators}
\label{sec-finite-range}

In the problem statements in Definitions~\ref{definitionSpectralProblem} and~\ref{definitionPseudospectralProblem}, we have assumed $H$ to be of short range, meaning that its matrix elements $H_{xy}$ decay with a sufficient power of the distance $d(x,y)$. The statements needed for the computation of the spectrum and pseudospectrum, however, are much easier to prove if we assume $H$ to have finite range. In this section, we show that if we can compute the spectrum for operators of finite range, we can extend this to short-range operators by simply cutting of the operator $H$ at a sufficiently large hopping length.

Let $H$ be a discrete operator on $\mathcal H$, and let $m > 0$. Then we define the \textit{trimmed operator} $G^m$ by its matrix elements
\begin{align}
 G_{xy}^m = \begin{cases}
                     H_{xy} & \text{for }d(x,y) \leq m\,, \\
                     0 & \text{otherwise}\,.
                    \end{cases}
\label{definitionOfTrimming}
\end{align}
The trimmed operator $G^m$, which is also an operator on $\mathcal H$, always has finite range $m$.

As a first step, we show that $G^m \to H$ converges in norm for short-range operators $H$. We can prove this using the Schur test, which provides a bound on the operator norm of a linear operator based on its matrix elements \cite{schur_1911}. A convenient formulation of the Schur test for discrete operators is the following Proposition \cite[Proposition 10.6]{aizenman_warzel}:

\begin{proposition}
\label{propSchurTest}
Let $B$ be a discrete operator on $\Gamma$ satisfying
\begin{align*}
 \norm{B}_{1,1} &:= \sup_{y \in \Gamma} \,\sum_{x \in \Gamma} \, |B_{xy}| < \infty\,,\\
 \norm{B}_{\infty,\infty} &:= \sup_{x \in \Gamma} \,\sum_{y \in \Gamma} \, |B_{xy}| < \infty\,.
\end{align*}
Then $B$ is bounded by $\norm{B} \leq \sqrt{\norm{B}_{1,1}\norm{B}_{\infty,\infty}}$.
\end{proposition}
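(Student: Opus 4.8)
The plan is to prove the bound directly via the Cauchy--Schwarz inequality, following the classical argument for the Schur test. Fix $\psi \in \ell^2(\Gamma)$; the goal is to estimate $\norm{B\psi}^2 = \sum_{x \in \Gamma} \big| \sum_{y \in \Gamma} B_{xy} \psi_y \big|^2$ and show it is bounded by $\norm{B}_{1,1}\norm{B}_{\infty,\infty}\norm{\psi}^2$.

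First I would split each summand as $B_{xy}\psi_y = |B_{xy}|^{1/2} \cdot \big(|B_{xy}|^{1/2}\psi_y\big)$ and apply Cauchy--Schwarz in the index $y$ to obtain, for each fixed $x$,
\[
\Big| \sum_{y} B_{xy}\psi_y \Big|^2 \;\leq\; \Big(\sum_y |B_{xy}|\Big)\Big(\sum_y |B_{xy}|\,|\psi_y|^2\Big) \;\leq\; \norm{B}_{\infty,\infty} \sum_y |B_{xy}|\,|\psi_y|^2\,,
\]
where the last step uses the definition of $\norm{B}_{\infty,\infty}$. Next I would sum over $x \in \Gamma$; since all terms are non-negative the order of summation may be exchanged, giving
\[
\norm{B\psi}^2 \;\leq\; \norm{B}_{\infty,\infty} \sum_{y} |\psi_y|^2 \sum_{x} |B_{xy}| \;\leq\; \norm{B}_{\infty,\infty}\,\norm{B}_{1,1} \sum_y |\psi_y|^2 \;=\; \norm{B}_{1,1}\norm{B}_{\infty,\infty}\,\norm{\psi}^2\,,
\]
using now the definition of $\norm{B}_{1,1}$. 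Taking square roots and the supremum over $\psi$ with $\norm{\psi}=1$ yields $\norm{B} \leq \sqrt{\norm{B}_{1,1}\norm{B}_{\infty,\infty}}$.

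There is essentially no serious obstacle here; the only points requiring a word of care are that the series defining $(B\psi)_x$ converges absolutely for each $x$ — which follows from $\sum_y |B_{xy}| \leq \norm{B}_{\infty,\infty} < \infty$ together with $\psi \in \ell^2(\Gamma) \subseteq \ell^\infty(\Gamma)$, or directly from the Cauchy--Schwarz step above — and the legitimacy of interchanging the two summations, which is immediate from non-negativity of the terms. If one prefers, the whole computation can first be carried out for finitely supported $\psi$ and then extended to all of $\ell^2(\Gamma)$ by density, but this is not necessary.
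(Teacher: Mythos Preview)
Your proof is correct and is the standard Cauchy--Schwarz argument for the Schur test. Note that the paper does not actually give its own proof of this proposition; it simply cites \cite[Proposition~10.6]{aizenman_warzel}, so there is no paper-internal proof to compare against.
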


We will also need the following bound of the sum of $d(x,y)^{-(n+\varepsilon)}$, when summing only over points with a given minimum distance $m$ from $x$.

\begin{restatable}{lemma}{decayLemma}
\label{lemmaDecaySum}
 Let $\varepsilon>0$ and $\Gamma$ be a subset of $\mathbb{R}^n$ which is uniformly discrete with distance $l$. Then there is a constant $C$, depending only on $n$ and $l$, such that
 \begin{align}
  \sum_{\substack{y \in \Gamma \\ d(x,y) > m}} d(x,y)^{-(n+\varepsilon)} \;\leq\; C \cdot \frac1{m^\varepsilon} \,.
  \label{eqDistpowers}
 \end{align}
Furthermore, the constant $C$ can be computed by a BSS algorithm from $n, \varepsilon$ and $l$.
\end{restatable}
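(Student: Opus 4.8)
The plan is to prove \eqref{eqDistpowers} by decomposing the summation region into dyadic cubical annuli, bounding the number of points of $\Gamma$ in each annulus by a volume‑packing argument in the maximum metric, and summing a geometric series. It suffices to treat the case $m\ge l$: in the intended application (the Schur test of Proposition~\ref{propSchurTest}) one has $x\in\Gamma$, and then uniform discreteness gives $d(x,y)>l$ for every $y\in\Gamma\setminus\{x\}$, so for $m<l$ the index set $\{y\in\Gamma:d(x,y)>m\}$ equals $\{y\in\Gamma:d(x,y)>l\}$ and the desired bound follows from the one for $m=l$ together with $m^{-\varepsilon}\ge l^{-\varepsilon}$. (For arbitrary $x\in\mathbb{R}^n$ the same argument works verbatim once $m\ge l$.)

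For $m\ge l$ and $j=0,1,2,\dots$ set $A_j:=\{y\in\Gamma:\,2^{j}m<d(x,y)\le 2^{j+1}m\}$, so that $\{y\in\Gamma:d(x,y)>m\}=\bigsqcup_{j\ge 0}A_j$. To estimate $|A_j|$, I would use that $d$ obeys the triangle inequality and that any two distinct points of $\Gamma$ are at $d$‑distance $>l$, so the open cubes $B_{l/2}(y)$, $y\in\Gamma$, are pairwise disjoint, each of volume $l^n$; for $y\in A_j$ one has $B_{l/2}(y)\subseteq B_{2^{j+1}m+l/2}(x)$, a cube of volume $(2^{j+2}m+l)^n$. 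Hence $|A_j|\le (2^{j+2}m+l)^n/l^n\le (2^{j+3}m)^n/l^n$, using $l\le m$. Since $d(x,y)^{-(n+\varepsilon)}<(2^{j}m)^{-(n+\varepsilon)}$ for $y\in A_j$, the contribution of $A_j$ is at most $2^{(j+3)n}l^{-n}m^{n}(2^{j}m)^{-(n+\varepsilon)}=2^{3n}l^{-n}\,2^{-j\varepsilon}m^{-\varepsilon}$, and summing over $j\ge 0$ yields \eqref{eqDistpowers} with $C=2^{3n}l^{-n}/(1-2^{-\varepsilon})$.

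To obtain a constant a BSS machine can output from $n,\varepsilon,l$ (such a machine has exact arithmetic and comparisons but no transcendental functions), I would replace the geometric‑series factor by an elementary rational bound: from $e^{t}\ge 1+t$ and $\ln 2>\tfrac12$ one gets $2^{-\varepsilon}=e^{-\varepsilon\ln 2}\le(1+\varepsilon\ln 2)^{-1}\le(1+\varepsilon/2)^{-1}$, hence $1-2^{-\varepsilon}\ge\varepsilon/(2+\varepsilon)$, and one may take $C=2^{3n}(2+\varepsilon)/(l^{n}\varepsilon)$, computed by the obvious loops for $2^{3n}$ and $l^{n}$ followed by finitely many arithmetic operations.

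I do not expect a serious obstacle: the estimate is routine. The only two points needing care are the treatment of small $m$ — handled by the observation that at a point of $\Gamma$ nothing enters the sum between distances $m$ and $l$ — and making the final constant genuinely computable in the BSS model rather than leaving it in the transcendental form $1/(1-2^{-\varepsilon})$.
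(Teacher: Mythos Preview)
Your argument is correct and, in places, more careful than the paper's. The two proofs take genuinely different routes. The paper passes from the maximum metric to the Euclidean norm via $d(x,y)\le\|x-y\|_2\le\sqrt{n}\,d(x,y)$, places at each lattice point a small rotated cube on whose interior $\|x-\cdot\|_2^{-(n+\varepsilon)}$ dominates its value at the lattice point, and bounds the resulting sum by the integral $\int_{\|z-x\|_2>m/4}\|x-z\|_2^{-(n+\varepsilon)}\,dz$ evaluated in spherical coordinates. You instead stay in the maximum metric throughout, use a dyadic annular decomposition, count points by a packing bound, and sum a geometric series. Your route is more elementary (no metric change, no integration) and yields a tidier explicit constant. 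You also address the BSS-computability claim more scrupulously: the paper's constant comes out as $n^{(n+\varepsilon)/2}(l/2)^{-n}(2\pi)^{n-1}4^{\varepsilon}/\varepsilon$, which contains $\pi$ and real powers, whereas your rational bound $1/(1-2^{-\varepsilon})\le(2+\varepsilon)/\varepsilon$ gives a constant computable with pure arithmetic. One small remark: both your constant and the paper's depend on $\varepsilon$ as well as on $n$ and $l$, so the phrase ``depending only on $n$ and $l$'' in the statement is a slight overstatement in either proof; this is harmless for the application in Lemma~\ref{lemmaCutoffDecay}.
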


The proof of this lemma is given in Appendix~\ref{sec-elementary}. We can now combine Lemma~\ref{lemmaDecaySum} with Proposition~\ref{propSchurTest} to prove the norm convergence $G^m \to H$.

\begin{lemma}
\label{lemmaCutoffDecay}
Let $H$ be a  short-range discrete operator. Then for every $\delta > 0$, there exists an $m > 0$ such that
\begin{align}
\label{eqGmDelta}
 \norm{H - G^m} \leq \delta\,.
\end{align}
The number $m$ is a computable function of $\delta,$ the dimension $n$, and the decay constant $C$ of Definition \ref{definition-short-range}.
\end{lemma}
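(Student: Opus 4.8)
The plan is to recognize $H - G^m$ as the discrete operator whose only nonzero matrix elements are $(H-G^m)_{xy} = H_{xy}$ for those $x,y \in \Gamma$ with $d(x,y) > m$, and to bound its operator norm via the Schur test of Proposition~\ref{propSchurTest}. By the short-range hypothesis, $|(H-G^m)_{xy}| \leq C\,d(x,y)^{-(n+\varepsilon)}$ whenever $d(x,y) > m$, so both the row sums and the column sums of $H - G^m$ are controlled by a single quantity of the form $\sup_{x\in\Gamma}\sum_{y\in\Gamma,\,d(x,y)>m} d(x,y)^{-(n+\varepsilon)}$.

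The second step is to invoke Lemma~\ref{lemmaDecaySum} with the uniform discreteness constant $l$ of $\Gamma$, which is part of the data of the discrete operator. It yields a constant $C_0 = C_0(n,l)$, computable by a BSS algorithm, such that
\[
\norm{H-G^m}_{1,1},\ \norm{H-G^m}_{\infty,\infty}\ \leq\ C\,C_0\,m^{-\varepsilon}\,.
\]
Inserting this into Proposition~\ref{propSchurTest} gives $\norm{H - G^m} \leq C\,C_0\,m^{-\varepsilon}$.

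The final step is to solve $C C_0 m^{-\varepsilon} \le \delta$ for $m$: taking $m := (C C_0 / \delta)^{1/\varepsilon}$ (rounded up if a convenient value is wanted) makes $\norm{H - G^m} \le \delta$. Since $C_0$ is a BSS-computable function of $n$ and $l$ and $\varepsilon$ is part of the given short-range data, $m$ is a computable function of $\delta$, $n$, $\varepsilon$, $l$, and $C$, which establishes the computability assertion.

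There is no deep obstacle here; the only points requiring care are (i) checking that the constant from Lemma~\ref{lemmaDecaySum} genuinely depends only on $n$ and $l$, so that the $m^{-\varepsilon}$ decay is uniform over all $x\in\Gamma$ (this is where uniform discreteness of $\Gamma$ with a fixed $l$ is used), and (ii) the computability bookkeeping, i.e.\ that each constant entering the formula for $m$ is available to a BSS machine. Both are already guaranteed by Lemma~\ref{lemmaDecaySum} and the definition of a discrete operator, so the proof amounts to a short combination of Proposition~\ref{propSchurTest} and Lemma~\ref{lemmaDecaySum}.
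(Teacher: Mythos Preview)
Your proposal is correct and matches the paper's proof essentially line for line: identify the matrix elements of $H-G^m$, bound the row and column sums via the short-range decay and Lemma~\ref{lemmaDecaySum}, apply the Schur test of Proposition~\ref{propSchurTest} to obtain $\norm{H-G^m}\le C\,C_0\,m^{-\varepsilon}$, and solve for $m$. Your remark that $m$ also depends on $\varepsilon$ and the uniform-discreteness constant $l$ (through the Lemma~\ref{lemmaDecaySum} constant) is accurate and, if anything, slightly more precise than the paper's own statement.
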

\begin{proof}
Let $B^m = H - G^m$. The entries of $B^m$ fulfill the opposite relation to \eqref{definitionOfTrimming}:
\begin{align*}
 B^m_{xy} = \begin{cases}
                     H_{xy} & \text{for }d(x,y) > m\,, \\
                     0 & \text{otherwise}\,.
                    \end{cases}
\end{align*}
By the short-range property of $H$, there is a constant $C > 0$ such that
\begin{align*}
 |H_{xy}| \leq C d(x,y)^{-(n+\varepsilon)}
\end{align*}
for all $x, y \in \Gamma$. For any $x \in \Gamma$, we can therefore bound
\begin{align*}
 \sum_{y \in \Gamma} |B_{xy}^m| = \sum_{\substack{y \in \Gamma \\ d(x,y) > m}} |H_{xy}| \leq C \sum_{\substack{y \in \Gamma \\ d(x,y) > m}} d(x,y)^{-(n+\varepsilon)}\,.
\end{align*}
By Lemma~\ref{lemmaDecaySum}, there is a constant $C_2 > 0$, which obviously does not depend on $H$, such that this sum is bounded by
\begin{align*}
 \sum_{\substack{y \in \Gamma \\ d(x,y) > m}} |H_{xy}| \leq C \cdot C_2 \cdot \frac{1}{m^\varepsilon}\,.
\end{align*}
The same bound clearly applies when summing over $x$. In the language of Proposition~\ref{propSchurTest}, this means that
\begin{align*}
 \norm{B}_{1,1} \leq C \cdot C_2 \cdot \frac{1}{m^2} \quad\text{and}\quad \norm{B}_{\infty,\infty} \leq C \cdot C_2 \cdot \frac{1}{m^\varepsilon}\,.
\end{align*}
Therefore Proposition~\ref{propSchurTest} bounds the operator norm of $B^m$ by
\begin{align*}
 \norm{H - G^m} = \norm{B^m} \leq C \cdot C_2 \cdot \frac{1}{m^\varepsilon}\,.
\end{align*}
It follows that Equation \eqref{eqGmDelta} is fulfilled for any $m \geq \left(\frac{C \cdot C_2}{\delta}\right)^{1/\varepsilon}$, proving the Lemma. It is also clear that a suitable $m$ can be computed by a BSS algorithm, since it is given by a simple formula in $C, C_2$ and $\delta$.
\end{proof}

Having shown the convergence $G^m \to H$ in norm, we can now proceed to show how the spectrum of any short-range operator $H$ may be computed from that of a suitable trimming $G^m$. This is quite simple in the normal case, but somewhat more involved for non-normal operators.

\subsection{Normal case}

Lemma~\ref{lemmaCutoffDecay} is sufficient to reduce the computational problem for short-range operators to that for finite range operators. Define the \textit{finite range flc spectral problem} as tuples $(\Omega \,\cap\, \Omega_{\rm fr}, \Lambda \cup \Lambda_{\rm fr}, (\mathcal M, d_H), \Xi)$ and the \textit{finite range flc pseudospectral problem} as $(\Omega_{\rm ps} \cap \Omega_{\rm fr}, \Lambda_{\rm ps} \cup \Lambda_{\rm fr}, (\mathcal M, d_H), \Xi_{\rm ps})$, where $\Omega_{\rm fr}$ is the set of all operators with finite range and $\Lambda_{\rm fr}$ contains just one additional evaluation function that associates to any finite-range operator $H$ the minimal $m$ such that $H_{xy} = 0$ whenever $d(x,y) > m$. Then we have the following proposition:

\begin{proposition}
\label{proposition:FReductionS}
 If the finite range \emph{flc} spectral problem $(\Omega \,\cap\, \Omega_{\rm fr}, \Lambda \cup \Lambda_{\rm fr}, (\mathcal M, d_H), \Xi)$ is solvable, then so is the short-range \emph{flc} spectral problem $(\Omega, \Lambda, (\mathcal{M}, d_H), \Xi)$.
\end{proposition}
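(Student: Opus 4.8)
The plan is to reduce to the finite-range problem by replacing $H$ with its trimming $G^m$ for a sufficiently large hopping length $m$, feeding $G^m$ to the assumed finite-range solver, and adding the two approximation errors. Concretely, given a short-range, normal, \emph{flc} operator $H$ and a target precision $2^{-k}$, I would first invoke Lemma~\ref{lemmaCutoffDecay} to compute an $m>0$ with $\norm{H-G^m}\le 2^{-(k+1)}$; by that lemma $m$ is a computable function of the dimension $n$ and the short-range constants $C,\varepsilon$ of Definition~\ref{definition-short-range}, which I would take to be among the data accessible through the evaluation functions $\Lambda$ of Conditions~\ref{definitionEvaluationFunctions}.

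The substance of the argument is that $G^m$ is an admissible instance of the finite-range \emph{flc} spectral problem and that a BSS algorithm with oracle access to $\Lambda$ for $H$ can simulate oracle access to $\Lambda\cup\Lambda_{\rm fr}$ for $G^m$. For admissibility: $G^m$ has finite range $m$ by construction, is short-range trivially, and has finite local complexity because $\Gamma$ is unchanged and equivalent action descends from $H$ to $G^m$ — if $H$ has equivalent action on $A$ and $B=t+A$ with phases $U(\cdot)$ as in Definition~\ref{def:EquivalentAction}, then, since $d(a_1+t,a_2+t)=d(a_1,a_2)$, one has $G^m_{b_1b_2}=U(a_1)\,G^m_{a_1a_2}\,U(a_2)^{*}$ for all $a_1,a_2\in A$ (both sides reduce to the corresponding $H$-entries when $d(a_1,a_2)\le m$ and vanish otherwise), so the finitely many $H$-classes of size-$L$ patches refine to finitely many $G^m$-classes. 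The delicate point is normality of $G^m$: when $H$ is self-adjoint it is immediate that $G^m$ is again self-adjoint, since $d$ is symmetric, which already covers the principal applications; for $H$ merely normal, trimming need not preserve normality, and this would have to be circumvented — for instance by running the finite-range solver instead on the self-adjoint operators $(G^m-z)^{*}(G^m-z)$, whose smallest spectral value equals $\rho_{G^m}(z)^{2}$, and reconstructing $\Spec(H)$ from the grid values of $\rho_{G^m}$ using that $\rho$ is $1$-Lipschitz in the operator and that $\rho_H(z)=d(z,\Spec(H))$ for normal $H$.

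Granting admissibility, the oracle simulation is routine: each function in $\Lambda$ evaluated on $G^m$ is answered by querying $\Lambda$ for $H$ and returning $0$ for matrix elements between sites at distance $>m$; the patch enumeration and equivalence data for $G^m$ are obtained from those for $H$ by the descent above; and the single extra function in $\Lambda_{\rm fr}$ returns the already-computed $m$. Running the assumed solver $\Gamma'_{k+1}$ on this simulated instance yields a compact set with $d_H(\Gamma'_{k+1}(G^m),\Spec(G^m))\le 2^{-(k+1)}$, and since $G^m$ and $H$ are normal the classical perturbation bound gives $d_H(\Spec(G^m),\Spec(H))\le\norm{G^m-H}\le 2^{-(k+1)}$; by the triangle inequality, $\Gamma_k(H):=\Gamma'_{k+1}(G^m)$ satisfies $d_H(\Gamma_k(H),\Spec(H))\le 2^{-k}$.

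The step I expect to be the main obstacle is verifying that $G^m$ is genuinely an admissible input — above all that normality survives the cut in the general normal (not merely self-adjoint) case, which fails in the naive sense and so forces the $\rho$-based detour just indicated — together with the more routine bookkeeping that the augmented family $\Lambda\cup\Lambda_{\rm fr}$ for $G^m$ is BSS-computable from $\Lambda$ for $H$, which hinges on the precise form of the evaluation functions deferred to the appendix.
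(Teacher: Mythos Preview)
Your core strategy---trim $H$ to $G^m$ via Lemma~\ref{lemmaCutoffDecay}, solve the finite-range problem for $G^m$ at precision $2^{-(k+1)}$, and combine with the perturbation bound $d_H(\Spec(H),\Spec(G^m))\le\norm{H-G^m}$ through the triangle inequality---is exactly what the paper does. In that sense your proposal matches the paper.

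Where you differ is that you are more careful. The paper's proof simply asserts that the finite-range solver can be applied to $G^m$ and that $d_H(\Spec(H),\Spec(G^m))\le 2^{-(k+1)}$, without checking that $G^m$ is an admissible instance or that the oracle calls for $G^m$ can be simulated from those for $H$. You correctly verify that $G^m$ inherits finite local complexity (your descent argument for equivalent action is right), and you correctly flag that normality of $G^m$ is \emph{not} automatic for a merely normal $H$: trimming commutes with taking adjoints, but $G^m(G^m)^{*}=(G^m)^{*}G^m$ need not follow from $HH^{*}=H^{*}H$. This is a genuine gap in the paper's written proof; it works as stated only in the self-adjoint case. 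Your proposed detour via the self-adjoint operators $(G^m-z)^{*}(G^m-z)$ is a reasonable patch in spirit, though as you note it would require some work to fit into the black-box finite-range solver (one would need to argue that these operators are again \emph{flc} with finite range $2m$, and to extract the bottom of the spectrum from a Hausdorff-approximation of the full spectrum, which is routine since the bottom of the spectrum is $\inf\Spec$).

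Your bookkeeping remark about simulating $\Lambda\cup\Lambda_{\rm fr}$ for $G^m$ from $\Lambda$ for $H$ is also a point the paper suppresses; given Conditions~\ref{definitionEvaluationFunctions} it is indeed routine, as you say.
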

\begin{proof}
Suppose that the finite range \emph{flc} spectral problem is solvable. To solve the short-range \emph{flc} spectral problem, let $H \in \Omega$ be given, and let $k \in \mathbb{N}$ be arbitrary. Then by Lemma~\ref{lemmaCutoffDecay}, we can find a cutoff distance $m$ such that $\norm{H - G^m} \leq 2^{-(k+1)}$. Because we have a solution of the finite range \emph{flc} spectral problem, we can then apply that solution to the finite range operator $G^m$ to produce an approximation $A$ to the spectrum of $G^m$ such that
\begin{align*}
d_H(A, \Spec(H)) < 2^{-(k+1)}\,.
\end{align*}
But because $\norm{H - G^m} \leq 2^{-(k+1)}$, we also have
\begin{align*}
d_H(\Spec(H), \Spec(G^m)) \leq 2^{-(k+1)}\,.
\end{align*}
The triangle inequality then implies that $d_H(A, \Spec(H)) \leq 2^{-k}$. Thus, we can always compute a set $A$ such that $d_H(A, \Spec(H)) \leq 2^{-k}$, solving the short-range \textit{flc} spectral problem.
\end{proof}

\subsection{Non-normal case}

For normal operators, we have used that a perturbation of size $\delta$ only changes the spectrum by at most $\delta$ in Hausdorff distance in order to prove that the spectral problem can be reduced to the case of finite range.
For non-normal operators, however, a small perturbation can cause arbitrarily large changes in the $\varepsilon$-pseudospectrum. However, we still have for any   perturbation $R$ of norm $\norm{R} \leq \tau<\varepsilon$ the inclusions
\begin{equation}
\label{eq:Pseudo1}
\Spec_{\varepsilon - \tau}(H) \subseteq \Spec_{\varepsilon}(H + R) \subseteq \Spec_{\varepsilon + \tau}(H)\,.
\end{equation}
  Equation \eqref{eq:Pseudo1} follows immediately from the following equivalent characterization of the $\varepsilon$-pseudospectrum as union of perturbed spectra \cite[Equation 4.4]{trefethenEmbreeBook}:
\begin{align*}
\Spec_\varepsilon(H) = \bigcup_{\substack{B \in \mathcal{L(H)} \\ \norm{B} \leq \varepsilon }} \Spec(H + B)\,.
\end{align*}
To show that the inclusions \eqref{eq:Pseudo1} are sufficient to bound the pseudospectrum of $H + R$ arbitrarily well, we need the following lemma, which shows the continuity of the $\varepsilon$-pseudospectrum in the level $\varepsilon$.

\begin{lemma}
\label{lemmaPreTau}
Let $H$ be a bounded operator and $\varepsilon > 0$. Then for every $\delta > 0$, there exists a $\tau > 0$ so that
\begin{align}
d_\mathrm{H}(\Spec_{\varepsilon - \tau}(H), \Spec_{\varepsilon + \tau}(H)) \leq \delta\,.
\label{eqHausdorffTauEpsilon}
\end{align}
In other words, the function $\varepsilon \to \Spec_{\varepsilon}(H)$ is continuous in $\varepsilon$ if the codomain is equipped with the Hausdorff distance.
\end{lemma}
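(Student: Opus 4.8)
The plan is to exploit the $1$-Lipschitz continuity of $\rho_H$ together with the compactness of the relevant sublevel sets to upgrade the trivial monotone inclusion $\Spec_{\varepsilon-\tau}(H) \subseteq \Spec_{\varepsilon+\tau}(H)$ into a quantitative Hausdorff bound. Since the inclusion already gives $d_{\mathrm H}(\Spec_{\varepsilon-\tau}(H),\Spec_{\varepsilon+\tau}(H)) = \sup_{z \in \Spec_{\varepsilon+\tau}(H)} d(z, \Spec_{\varepsilon-\tau}(H))$, the only thing to control is how far a point $z$ with $\rho_H(z) \le \varepsilon+\tau$ can be from the set $\{\rho_H \le \varepsilon-\tau\}$. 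The natural strategy is a ``push toward lower values of $\rho_H$'' argument: starting from such a $z$, move in a direction in which $\rho_H$ decreases, and use the Lipschitz bound to guarantee that within a controlled distance one reaches a point where $\rho_H \le \varepsilon - \tau$.

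First I would reduce to a bounded region: because $H$ is bounded, $\Spec(H) \subseteq \overline{B_{\|H\|}(0)}$, and $\rho_H(z) \ge d(z,\Spec(H)) - \text{(const)}$ fails in general, but we do have $\rho_H(z) \ge |z| - \|H\|$ directly from \eqref{eqRhoVariational} (taking any unit $\psi$, $\|(H-z)\psi\| \ge |z| - \|H\psi\| \ge |z| - \|H\|$). Hence $\Spec_{\varepsilon+\tau}(H)$ is contained in the fixed compact disc $\overline{B_{\|H\|+\varepsilon+1}(0)}$ for all $\tau \le 1$, and likewise all the sets in play are compact. Next, fix $\delta > 0$ and suppose for contradiction that for every $\tau > 0$ there is a point $z_\tau$ with $\rho_H(z_\tau) \le \varepsilon + \tau$ but $d(z_\tau, \Spec_{\varepsilon-\tau}(H)) > \delta$, i.e.\ $\rho_H(w) > \varepsilon - \tau$ for all $w \in \overline{B_\delta(z_\tau)}$. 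Taking $\tau \to 0$ along a sequence and passing to a convergent subsequence $z_{\tau_j} \to z_*$ (possible by compactness), the $1$-Lipschitz continuity of $\rho_H$ gives $\rho_H(z_*) \le \varepsilon$ and $\rho_H(w) \ge \varepsilon$ for all $w \in \overline{B_\delta(z_*)}$; in particular $z_*$ is a local minimum of $\rho_H$ with value exactly $\varepsilon$, and $\rho_H \equiv \varepsilon$ is impossible to rule out this way alone — so this soft argument does not immediately close.

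The cleaner route, which I would actually carry out, avoids the contradiction argument and instead proves the bound with an explicit $\tau$. The key structural fact is that $\rho_H$ has \emph{no strict local minimum at a nonzero value}: if $\rho_H(z_0) = c > 0$, then $z_0 \notin \Spec(H)$, so $(H-z_0)^{-1}$ exists with $\|(H-z_0)^{-1}\| = 1/c$; picking a unit vector $\psi$ with $\|(H-z_0)^{-1}\psi\|$ close to $1/c$ and setting $\phi = (H-z_0)^{-1}\psi/\|(H-z_0)^{-1}\psi\|$, one computes $\frac{d}{dt}\big|_{t=0}\|(H-z_0-t e^{i\theta})\phi\|$ for a suitable phase $e^{i\theta}$ and finds it is strictly negative (of size bounded below in terms of $c$ and the near-optimality), so $\rho_H$ strictly decreases along the ray $z_0 + t e^{i\theta}$. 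Iterating / integrating this, any $z$ with $\rho_H(z) = c$ can be connected to a point with $\rho_H = c'$ for any $0 < c' < c$ by a path of length at most $C(c - c')$ for a constant $C$ that can be taken uniform for $c$ in a compact subinterval of $(0,\infty)$ bounded away from $0$ — this is essentially the statement that the sublevel sets $\{\rho_H \le s\}$ are ``$(1/\gamma)$-accessible'' from nearby higher levels. Applying this with $c = \rho_H(z) \le \varepsilon + \tau$ and $c' = \varepsilon - \tau$ yields $d(z, \Spec_{\varepsilon - \tau}(H)) \le C \cdot 2\tau$, so choosing $\tau = \delta/(2C)$ gives \eqref{eqHausdorffTauEpsilon}.

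I expect the main obstacle to be making the ``$\rho_H$ strictly decreases along a suitable direction, with a quantitative rate'' step rigorous and uniform: one must show the descent rate is bounded below away from zero in terms of the current value $\rho_H(z)$ (which is legitimate since we only need it on the annular level set $\varepsilon - \tau \le \rho_H \le \varepsilon + \tau$, where values are bounded away from $0$ for small $\tau$, using $\varepsilon > 0$), and then integrate this differential inequality along a path — handling the fact that the optimal descent direction varies with the base point. An alternative that sidesteps the path-integration is to invoke the known fact (e.g.\ from \cite{trefethenEmbreeBook} or \cite{noteOnHausdorff}) that $\Spec_\varepsilon(H) = \overline{\Spec_\varepsilon^\circ(H)}$ and that the boundary $\partial\Spec_\varepsilon(H)$ equals $\{\rho_H = \varepsilon\}$ for $\varepsilon$ not a critical value, combined with a Baire/measure argument that critical values are rare; but the explicit descent argument is more self-contained and gives the computable dependence of $\tau$ on $\delta$ that later sections need.
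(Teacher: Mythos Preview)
Your contradiction setup is exactly what the paper does: take $\tau_j \downarrow 0$, pick $z_{\tau_j}$ witnessing failure of \eqref{eqHausdorffTauEpsilon}, extract a convergent subsequence by compactness of $\Spec_{\varepsilon+1}(H)$, and conclude that the limit $z_*$ satisfies $\rho_H(z_*)=\varepsilon$ while $\rho_H \ge \varepsilon$ on a ball around $z_*$. You then abandon this, saying the soft argument ``does not immediately close'' because you cannot rule out $\rho_H \equiv \varepsilon$ locally. But it \emph{does} close, and with exactly the ingredient you yourself cite at the end: the function $z \mapsto \rho_H(z)^{-1} = \|(H-z)^{-1}\|$ is subharmonic on the resolvent set and satisfies a maximum principle (Trefethen--Embree, Theorem~4.2), so it cannot have an interior local maximum. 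Equivalently, the fact you quote that $\Spec_\varepsilon(H)=\overline{\Spec_\varepsilon^\circ(H)}$ means there are points arbitrarily close to $z_*$ with $\rho_H<\varepsilon$, contradicting $\rho_H\ge\varepsilon$ on $B_{\delta/2}(z_*)$. That is precisely how the paper finishes.

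Your alternative ``quantitative descent'' route is not a complete argument as written. The derivative computation you sketch shows that for a \emph{fixed} near-optimal $\phi$ the quantity $\|(H-z_0-te^{i\theta})\phi\|$ decreases along some ray, which gives an upper bound on $\rho_H$ along that ray; but turning this into a uniform lower bound on the descent rate of $\rho_H$ itself, and then integrating while the optimal direction varies, is genuinely delicate and you have not carried it out. Moreover, your stated motivation --- that later sections need a computable $\tau(\delta)$ --- is incorrect: both Proposition~\ref{prop:FRreductionPS} and Lemma~\ref{lemmaTermination} only use the \emph{existence} of a sufficiently small $\tau$, since the algorithms there iterate $\tau\downarrow 0$ and check a terminating condition rather than computing $\tau$ in advance. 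So the non-constructive contradiction argument, closed via the maximum principle, is both sufficient and simpler.
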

\begin{proof}
 Let $\delta > 0$, and define the open $\varepsilon$-pseudospectrum
 \begin{align*}
 \spec_\varepsilon(H) := \{ z \in \mathbb{C} \;|\; \rho_H(z) < \varepsilon \}\,.
 \end{align*}
First note that for all $0<\tau<\varepsilon$
\begin{equation}
\begin{aligned}
 &d_\mathrm{H}(\Spec_{\varepsilon - \tau}(H), \Spec_{\varepsilon + \tau}(H)) \leq \delta\\
 &\quad\qquad\Leftrightarrow\quad D_\tau := \Spec_{\varepsilon + \tau}(H) \cap \left( B_\delta(\Spec_{\varepsilon - \tau}(H)) \right)^c = \varnothing\,.
 \end{aligned}
 \label{ineq_reformulated}
\end{equation}
In this proof we use the notation $B_\delta(A):= \bigcup_{z\in A} B_\delta(z)$ for $A\subset\mathbb{C}$ and $B_\delta(z)$ just denotes the open ball with radius $\delta$ around $z$. Since $\Spec_{\varepsilon}(H)$ is closed and bounded,
the sets $D_\tau$ are compact for all $\tau > 0$. Their intersection is
\begin{align*}
 \bigcap_{\tau > 0} D_{\tau} &
= \;\bigcap_{\tau > 0} \left[ \Spec_{\varepsilon + \tau}(H) \cap \left( B_\delta(\Spec_{\varepsilon - \tau}(H)) \right)^c \right] \\
 &=\; \left[ \bigcap_{\tau > 0} \Spec_{\varepsilon + \tau}(H) \right] \cap \left[ \bigcap_{\tau > 0} \left( B_\delta(\Spec_{\varepsilon - \tau}(H)) \right)^c \right] \\
 &=\; \Spec_\varepsilon(H) \cap \left[ \bigcup_{\tau > 0} B_\delta(\Spec_{\varepsilon - \tau}(H)) \right]^c \\
 &=\; \Spec_\varepsilon(H) \cap B_\delta \left( \bigcup_{\tau > 0} \Spec_{\varepsilon - \tau}(H) \right)^c \\
 &=\; \Spec_\varepsilon(H) \cap B_\delta \left( \spec_\varepsilon(H) \right)^c
\end{align*}
Now
\begin{align*}
B_\delta(\spec_\varepsilon(H)) \supseteq \overline{\spec_\varepsilon(H)} = \Spec_\varepsilon(H)\,,
\end{align*}
where the last equality follows from the fact that   for linear operators $H$ on Hilbert spaces the function  $\rho(H)\ni z\mapsto \|(H-z)^{-1}\|$ and thus also the function $\rho(H)\ni z\mapsto \rho_H(z)$ cannot be constant on any open set, see Proposition~2 in~\cite{globevnik_1976}.
Hence, $\bigcap_{\tau > 0} D_{\tau} = \varnothing$. Since the sets $D_{\tau}$ are compact and decreasing (that is, $D_{\tau'} \subseteq D_\tau$ for $\tau' \geq \tau$), there exists $\tau > 0$ such that $D_\tau = \varnothing$. This proves \eqref{ineq_reformulated} and thereby the lemma.
\end{proof}

Lemma \ref{lemmaPreTau} will also be useful to prove the computability of the pseudospectrum for finite range operators in Section~\ref{sec-computability-pseudospectrum}. Here we use it  to prove that the computation of the pseudospectrum can be reduced to the case of finite range operators instead of short-range ones using the inclusions \eqref{eq:Pseudo1}.

\begin{proposition}
\label{prop:FRreductionPS}
If the finite range \emph{flc} pseudospectral problem $(\Omega_{\rm ps} \cap \Omega_{\rm fr}, \Lambda_{\rm ps} \cup \Lambda_{\rm fr}, (\mathcal M, d), \Xi_{\rm ps})$ is solvable, then the short-range problem $(\Omega_{\rm ps}, \Lambda_{\rm ps}, (\mathcal M, d), \Xi_{\rm ps})$ is solvable.
\end{proposition}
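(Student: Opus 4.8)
The plan is to mirror the structure of the proof of Proposition~\ref{proposition:FReductionS}, but replacing the naive "perturbation changes the spectrum by at most $\delta$" estimate by the combination of the inclusions \eqref{eq:Pseudo1} and the continuity Lemma~\ref{lemmaPreTau}. Assume the finite range \emph{flc} pseudospectral problem is solvable, let $(H, \varepsilon) \in \Omega_{\rm ps}$ be given, and let $k \in \mathbb{N}$ be arbitrary; we must compute a compact set $A$ with $d_H(A, \Spec_\varepsilon(H)) \leq 2^{-k}$.

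First I would invoke Lemma~\ref{lemmaPreTau} with $\delta = 2^{-(k+1)}$ to obtain a $\tau > 0$ with $d_H(\Spec_{\varepsilon - \tau}(H), \Spec_{\varepsilon + \tau}(H)) \leq 2^{-(k+1)}$; shrinking $\tau$ if necessary we may also assume $\tau < \varepsilon$. Since $\rho_H^{-1}$ is subharmonic, the bound in Lemma~\ref{lemmaPreTau} is witnessed by a computable $\tau$ — this is the one point that needs a little care, so I would either state that a sufficiently small dyadic $\tau$ can be found by the same compactness argument made effective, or note that $\tau$ need only satisfy the \emph{inequality} \eqref{eqHausdorffTauEpsilon}, which can be checked/driven to hold algorithmically from the (computable) upper and lower bounds on $\rho_H$. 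Next, using Lemma~\ref{lemmaCutoffDecay}, compute a cutoff length $m$ with $\norm{H - G^m} \leq \tau$, and set $R = G^m - H$, so $G^m = H + R$ with $\norm{R} \leq \tau < \varepsilon$. By the inclusions \eqref{eq:Pseudo1} applied to the perturbation $R$,
\begin{align*}
\Spec_{\varepsilon - \tau}(H) \;\subseteq\; \Spec_\varepsilon(G^m) \;\subseteq\; \Spec_{\varepsilon + \tau}(H)\,.
\end{align*}
Since $G^m \in \Omega_{\rm ps} \cap \Omega_{\rm fr}$ and its finite range $m$ is accessible via $\Lambda_{\rm fr}$, apply the assumed solver to the pair $(G^m, \varepsilon)$ with precision parameter $k+1$ to produce a compact set $A$ with $d_H(A, \Spec_\varepsilon(G^m)) \leq 2^{-(k+1)}$.

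It remains to control $d_H(A, \Spec_\varepsilon(H))$. From the two-sided inclusion above together with $\Spec_{\varepsilon-\tau}(H) \subseteq \Spec_\varepsilon(H) \subseteq \Spec_{\varepsilon+\tau}(H)$, both $\Spec_\varepsilon(G^m)$ and $\Spec_\varepsilon(H)$ are sandwiched between $\Spec_{\varepsilon-\tau}(H)$ and $\Spec_{\varepsilon+\tau}(H)$, which are within Hausdorff distance $2^{-(k+1)}$ of each other; hence $d_H(\Spec_\varepsilon(G^m), \Spec_\varepsilon(H)) \leq 2^{-(k+1)}$. Combining with $d_H(A, \Spec_\varepsilon(G^m)) \leq 2^{-(k+1)}$ via the triangle inequality for $d_H$ gives $d_H(A, \Spec_\varepsilon(H)) \leq 2^{-k}$, which is what we wanted; the output $A$ is compact because the solver for the finite range problem returns elements of $\mathcal M$. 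I expect the only genuine obstacle to be the effectivity of the choice of $\tau$ in Lemma~\ref{lemmaPreTau}: the lemma as stated is a pure existence statement, and to turn it into an algorithm one needs to argue that a valid $\tau$ can be found by a terminating search — e.g. by halving a trial $\tau$ and testing the Hausdorff-distance bound using the already-established computability of $\rho_H$ from Section~\ref{sec-computability-lower-norm} (on a sufficiently fine grid over the compact set $\Spec_{\varepsilon+1}(H)$). Everything else is bookkeeping with the triangle inequality and the inclusions \eqref{eq:Pseudo1}.
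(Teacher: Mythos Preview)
Your overall strategy --- Lemma~\ref{lemmaPreTau} for continuity in the level, the inclusions \eqref{eq:Pseudo1} for the perturbation, and then a sandwich/triangle-inequality argument --- is correct and is exactly what the paper uses. You have also put your finger on the only real difficulty: Lemma~\ref{lemmaPreTau} is a pure existence statement, so an algorithm cannot simply ``invoke'' it to produce $\tau$.

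Your suggested fix, however, does not work as written. You propose to search over dyadic $\tau$ and test the bound $d_H(\Spec_{\varepsilon-\tau}(H),\Spec_{\varepsilon+\tau}(H))\le 2^{-(k+1)}$ using the computability of $\rho_H$ from Section~\ref{sec-computability-lower-norm}. But Corollary~\ref{approximability} there is stated and proved only for \emph{finite range} operators; the short-range $H$ does not qualify, and extending that computability to $H$ is precisely what this proposition is meant to establish, so the argument is circular. Even setting circularity aside, turning grid values of $\rho_H$ into a certified Hausdorff bound between two sublevel sets is itself the pseudospectral computation you are trying to reduce.

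The paper avoids this by never testing any condition on $H$. It iterates over $\tau=1,\tfrac12,\tfrac14,\dots$; for each $\tau$ it trims to $G^m$ with $\|H-G^m\|\le\tau$ and uses the \emph{assumed} finite-range solver on $(G^m,\varepsilon-\tau)$ and $(G^m,\varepsilon+\tau)$ to get finitely-represented sets $A,B$ within $\delta/6$ of $\Spec_{\varepsilon\mp\tau}(G^m)$, terminating when $d_H(A,B)\le\delta/2$. Correctness uses the sandwich $\Spec_{\varepsilon-\tau}(G^m)\subseteq\Spec_\varepsilon(H)\subseteq\Spec_{\varepsilon+\tau}(G^m)$ (which is \eqref{eq:Pseudo1} again), exactly as in your argument. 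Termination is proved \emph{nonconstructively}: Lemma~\ref{lemmaPreTau} supplies some $\tau'$ for $H$, and once $2\tau<\tau'$ the chain $\Spec_{\varepsilon-\tau'}(H)\subseteq\Spec_{\varepsilon-\tau}(G^m)\subseteq\Spec_{\varepsilon+\tau}(G^m)\subseteq\Spec_{\varepsilon+\tau'}(H)$ forces $d_H(A,B)\le\delta/2$ via Lemma~\ref{lemmaABCX}. The key idea you are missing is this separation: the stopping \emph{criterion} involves only the finite-range trimming $G^m$ and the assumed solver (hence is computable), while the non-effective Lemma~\ref{lemmaPreTau} appears only in the termination \emph{proof}.
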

\begin{proof}
Assume that the finite range problem is solvable. Let $(H, \varepsilon) \in \Omega_{\rm ps} \cap \Omega_{\rm fr}$ be a concrete short-range spectral problem and fix the allowed margin of error $\delta:=2^{-k}$. In order to approximate the $\varepsilon$-pseudospectrum of $H$, we proceed as follows: Iterate over $\tau = 1, \frac12, \frac14, \dots$. For each $\tau$, trim $H$ to an operator $G^m$ such that $\norm{H-G^m} \leq \tau$. A cut-off length $m$ fulfilling that inequality can be found in a computable way by Lemma~\ref{lemmaCutoffDecay}.
We have assumed that the finite range problem has a solution, so we can now apply that solution to construct subsets $A$ and $B$ such that
\begin{gather}
d_\mathrm{H}(A, \Spec_{\varepsilon - \tau}(G^m)) \leq \delta / 6 \label{eq:approximationPropertyA}\phantom{\,.}\\
 d_\mathrm{H}(B, \Spec_{\varepsilon + \tau}(G^m)) \leq \delta / 6\label{eq:approximationPropertyB}\,.
\end{gather}
Then, if we have $d_\mathrm{H}(A, B) \leq \delta / 2$, we terminate with $A$ as our approximation to $\Spec_\varepsilon(H)$, otherwise we continue with the next $\tau$ in our sequence.

Now if the algorithm terminates because the condition $d_\mathrm{H}(A, B) \leq \delta / 2$ is fulfilled, then the result $A$ will be an approximation of $\Spec_\varepsilon(H)$ with Hausdorff distance less than $\delta$. We can show this using the triangle inequality and the inclusions \eqref{eq:Pseudo1} as follows:
\begin{align*}
d_\mathrm{H}(A, \Spec_\varepsilon(H)) &\leq
d_\mathrm{H}(A, \Spec_{\varepsilon - \tau}(G^m)) + d_\mathrm{H}(\Spec_{\varepsilon - \tau}(G^m), \Spec_\varepsilon(H))
\\
&\leq
d_\mathrm{H}(A, \Spec_{\varepsilon - \tau}(G^m)) + d_\mathrm{H}(\Spec_{\varepsilon - \tau}(G^m), \Spec_{\varepsilon + \tau}(G^m)) \\
&\leq d_\mathrm{H}(A, \Spec_{\varepsilon - \tau}(G^m)) + d_\mathrm{H}(\Spec_{\varepsilon - \tau}(G^m), A)\\
&\phantom{\leq } + d_\mathrm{H}(A, B) + d_\mathrm{H}(B, \Spec_{\varepsilon + \tau}(G^m)) \\
&\leq 3 \tfrac{\delta}{6} + \tfrac{\delta}{2} = \delta \,.
\end{align*}
In the last step, we have combined the cut-off bounds \eqref{eq:approximationPropertyA} and \eqref{eq:approximationPropertyB} with the condition $d_H(A, B)$.

It remains to show that the described algorithm does always terminate. That is, we need to show that the condition
$d_\mathrm{H}(A, B) \leq \delta / 2$
will be fulfilled  for $\tau$ small enough and $A, B$ as above. According to Lemma~\ref{lemmaPreTau} there exists a $\tau' > 0$ such that
\begin{align*}
d_\mathrm{H}(\Spec_{\varepsilon - \tau'}(H), \Spec_{\varepsilon + \tau'}(H)) \leq \delta / 6\,.
\end{align*}
 This does not imply a way of computing $\tau'$, but because our algorithm descends to arbitrarily small $\tau$, it will eventually come to a $\tau$ such that $\tau < \tau' / 2$.
Assuming this, the algorithm will then choose $G^m$ such that $\norm{G^m - H} < \tau$, and we can use  \eqref{eq:Pseudo1} to conclude
\begin{align}
\Spec_{\varepsilon - \tau'}(H) \subseteq \Spec_{\varepsilon - \tau}(G^m) \subseteq \Spec_{\varepsilon}(H) \subseteq \Spec_{\varepsilon + \tau}(G^m) \subseteq \Spec_{\varepsilon + \tau'}(H)\,,
\label{eqFivespec}
\end{align}
where we have used $2\tau < \tau'$.
We can now apply Lemma~\ref{lemmaABCX} twice to the inclusions in~\eqref{eqFivespec} to see that
\begin{align*}
d_\mathrm{H}( \Spec_{\varepsilon + \tau}(G^m),\Spec_{\varepsilon - \tau}(G^m)) &\leq d_\mathrm{H}( \Spec_{\varepsilon + \tau}(G^m),\Spec_{\varepsilon - \tau'}(H)) \\
&\leq d_\mathrm{H}(\Spec_{\varepsilon - \tau'}(H), \Spec_{\varepsilon + \tau'}(H)) \\
&\leq \delta / 6\,.
\end{align*}
It then follows from  \eqref{eq:approximationPropertyA} and~\eqref{eq:approximationPropertyB} that
\begin{align*}
d_\mathrm{H}(A, B) &\leq d_\mathrm{H}(A, \Spec_{\varepsilon - \tau}(G^m)) + d_\mathrm{H}(\Spec_{\varepsilon - \tau}(G^m), \Spec_{\varepsilon + \tau}(G^m))+ d_\mathrm{H}(\Spec_{\varepsilon + \tau}(G^m), B)\\
&\leq 3  \frac{\delta}{6} = \delta / 2\,.
\end{align*}
We have thus shown that the algorithm terminates at the latest when $\tau$ becomes smaller than $\tau'/2$.
\end{proof}

We have thus established the reduction of the spectral and pseudospectral problems to finite range operators. In the rest of the paper, we  restrict ourselves to finite range operators when describing how to solve the \emph{flc} spectral and pseudospectral problems, since by Propositions~\ref{proposition:FReductionS} and~\ref{prop:FRreductionPS}, the same algorithms can then also be applied to the short-range \emph{flc} problems by using a suitable cut-off.

\section{The pseudospectral inclusion bound}
\label{sec-upper-bound}

In this section, we formulate and prove the pseudospectral inclusion bound from \cite{colbrook_2019} for our setting. Moreover, we also show that the bound is computable.

\subsection{Formulation of the pseudospectral inclusion bound}
\label{subsec-inclusion-bound}

A bound showing the inclusion of a complex number in  the $\varepsilon$-pseudospectrum for a certain~$\varepsilon$, or in other words an upper bound on $\rho_H$, has been described in \cite{colbrook_2019}.
In the following  we describe the bound using the singular value decomposition (SVD) of a rectangular matrix $A$, while in \cite{colbrook_2019}, the eigenvalues of $A^T A$ are used, which are just the squares of the singular values of $A$.

\begin{definition}\label{def:Q}
Let $H$ be a discrete operator with finite range $m$, let $x \in \mathbb{R}^n$ be any point and $L > 0$, $\lambda \in \mathbb{C}$. The   \textit{uneven section} for these data is the linear operator
\begin{gather*}
  Q_{L, \lambda,x} : \mathcal{H}_{B_L(x)} \to \mathcal{H}_{B_{L+m}(x)} \\
  Q_{L, \lambda,x}(\psi) = \mathbf{1}_{B_{L+m}(x)} (H - \lambda) \mathbf{1}_{B_L(x)} \psi\,.
\end{gather*}
\end{definition}

Recall that   $\mathcal{H}_{B_L(x)} := \mathrm{span}\{ e_y\,|\, y\in B_L(x)\cap \Gamma\}$ and $\mathbf{1}_{B_L(x)}$ denotes the orthogonal projection onto $\mathcal{H}_{B_L(x)}$. Because $\Gamma$ is uniformly discrete, $\mathcal{H}_{B_L(x)}$ and $\mathcal{H}_{B_{L+m}(x)}$ are finite dimensional, so that $Q_{L,\lambda,x}$ can be respresented as a rectangular matrix.

\begin{theorem}
\label{upperBoundTheorem}
 Let $H$ be a discrete operator with finite range $m >0$, and let $\lambda \in \mathbb{R}$, $L > 0$, and $x \in \mathbb{R}^n$ be arbitrary. Let
 \begin{align}
 \label{eqDefinitionEpsilon}
 \varepsilon_{L,\lambda,x} = s_1(Q_{L, \lambda, x})
 \end{align}
 be the smallest singular value of $Q_{L,\lambda,x}$. Then $\lambda$ is contained in the $\varepsilon_{L, \lambda, x}$-pseudospectrum of~$H$:
 \begin{align}
 \label{eqRhoUpperBound}
 \rho_H(\lambda)\leq \varepsilon_{L, \lambda, x}\,.
 \end{align}
\end{theorem}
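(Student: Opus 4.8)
The plan is to show that the smallest singular value $\varepsilon_{L,\lambda,x}$ of the uneven section $Q_{L,\lambda,x}$ bounds $\rho_H(\lambda)$ from above by exhibiting, for each $L$, an explicit unit vector $\psi$ supported on $B_L(x)$ such that $\norm{(H-\lambda)\psi}$ is small. Recall from \eqref{eqRhoVariational} that $\rho_H(\lambda) = \inf_{\psi \neq 0} \norm{(H-\lambda)\psi}/\norm{\psi}$, so producing a single good test vector immediately yields an upper bound on $\rho_H(\lambda)$. The natural candidate is the right singular vector $\psi_0 \in \mathcal{H}_{B_L(x)}$ associated with the smallest singular value $s_1(Q_{L,\lambda,x}) = \varepsilon_{L,\lambda,x}$; by definition of the SVD, $\norm{Q_{L,\lambda,x}\psi_0} = \varepsilon_{L,\lambda,x}\norm{\psi_0}$, i.e.\ $\norm{\mathbf{1}_{B_{L+m}(x)}(H-\lambda)\mathbf{1}_{B_L(x)}\psi_0} = \varepsilon_{L,\lambda,x}\norm{\psi_0}$.

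The key observation is that the finite-range hypothesis makes this test vector \emph{exact}: if $\psi_0$ is supported inside $B_L(x)$, then $(H-\lambda)\psi_0$ is supported inside $B_{L+m}(x)$, because $H$ has hopping length at most $m$ and so cannot move weight more than distance $m$ from the support of $\psi_0$. Concretely, $\langle e_z, (H-\lambda)\psi_0\rangle = \sum_{y \in B_L(x)\cap\Gamma}(H-\lambda)_{zy}(\psi_0)_y$, and for $d(z,x) > L+m$ every term vanishes since $d(z,y) > m$ for all $y$ with $d(y,x) \leq L$. Hence $\mathbf{1}_{B_{L+m}(x)}(H-\lambda)\psi_0 = (H-\lambda)\psi_0$, and therefore
\begin{align*}
\norm{(H-\lambda)\psi_0} = \norm{\mathbf{1}_{B_{L+m}(x)}(H-\lambda)\mathbf{1}_{B_L(x)}\psi_0} = \varepsilon_{L,\lambda,x}\norm{\psi_0}\,.
\end{align*}
Plugging $\psi_0$ into the variational characterization \eqref{eqRhoVariational} gives $\rho_H(\lambda) \leq \norm{(H-\lambda)\psi_0}/\norm{\psi_0} = \varepsilon_{L,\lambda,x}$, which is \eqref{eqRhoUpperBound}. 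One should note that if $\psi_0 = 0$ is forced (which cannot happen, since singular vectors are nonzero by convention) or if $\mathcal{H}_{B_L(x)}$ is trivial (excluded for $L$ large enough relative to the uniform discreteness of $\Gamma$, though one may want to remark that for small $L$ the bound $\varepsilon_{L,\lambda,x}$ may be taken as $+\infty$ or the statement is vacuous), the argument still goes through.

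I expect the only real subtlety — and it is minor — to be bookkeeping about the support/projection identity: making precise that $\mathbf{1}_{B_L(x)}\psi_0 = \psi_0$ because $\psi_0 \in \mathcal{H}_{B_L(x)}$ by construction, and that the finite-range condition with the max-distance metric \eqref{DefMaxDist} indeed gives $\operatorname{supp}((H-\lambda)\psi_0) \subseteq B_{L+m}(x)\cap\Gamma$. The triangle inequality $d(z,x) \leq d(z,y) + d(y,x)$ for the max metric, combined with $d(z,y) \leq m$ whenever $(H-\lambda)_{zy} \neq 0$ and $d(y,x) \leq L$, is all that is needed. Everything else is a direct unwinding of definitions; there is no analytic obstacle here, since the point of the finite-range assumption is precisely to eliminate the boundary error term that would otherwise appear when restricting $(H-\lambda)\psi$ to a finite window.
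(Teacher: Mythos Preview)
Your proposal is correct and follows essentially the same approach as the paper: take the right singular vector $\psi_0$ of $Q_{L,\lambda,x}$ corresponding to the smallest singular value, use the finite-range hypothesis to identify $(H-\lambda)\psi_0$ with $Q_{L,\lambda,x}\psi_0$, and conclude via the variational characterization of $\rho_H$. The paper's proof is slightly terser (it writes the SVD explicitly and reads off $(H-\lambda)v_1 = s_1 u_1$), but the logical content is identical.
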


\begin{proof}
The singular value decomposition (SVD) of $Q_{L,\lambda,x}$ decomposes $Q_{L, \lambda, x}$ as a product
\begin{align*}
Q_{L, \lambda,x} = U S V^*
\end{align*}
where $U$ and $V$ are unitary and $S$ is a rectangular diagonal matrix. We can also write this matrix product as
\begin{align*}
 Q_{L, \lambda, x} = \sum_{i = 1}^n |u_i \rangle s_i \langle v_i |
\end{align*}
where $(u_i)_{i = 1, \dots, n}$ are the first $n$ columns of $U$, $(v_i)_{i = 1, \dots, n}$ are the columns of $V$, and $(s_i)_{i = 1, \dots, n}$ are the diagonal elements of $S$. We assume that the $(s_i)_{i = 1, \dots, n}$ are sorted in ascending order so that $s_1 = \varepsilon_{L, \lambda, x}$.

Then $v_1$, considered as a vector in $\mathcal H$ via the canonical inclusion $\iota : \mathcal H_{B_L(x)} \to \mathcal H$, is an $\varepsilon_{L,\lambda,x}$-quasimode for $H$. Indeed, we have
\begin{align*}
 (H - \lambda) v_1 &= (H - \lambda) \mathbf{1}_{B_L(x)} v_1 &\text{(since $v_1$ is supported on $B_L(x)$)} \\
 &= \mathbf{1}_{B_{L+m}(x)} (H - \lambda) \mathbf{1}_{B_L(x)} v_1 &\text{(by the finite range of $H$)} \\
 &= Q_{L, \lambda, x} v_1 & \\
 &= s_1 u_1 &
\end{align*}
and therefore $\norm{(H - \lambda) v_1} = s_1 \norm{u_1} = s_1$. We conclude that $\norm{(H - \lambda)v_1} \leq s_1 =  \varepsilon_{L,\lambda, x}$, and thus  $v_1$ is an $\varepsilon_{L,\lambda,x}$-quasimode, which implies $\lambda \in \Spec_{\varepsilon_{L, \lambda, x}}(H)$ by definition of the pseudospectrum~\cite{trefethenEmbreeBook}.
\end{proof}

For normal operators, Equation \eqref{eqRhoUpperBound} corresponds to an upper bound on the distance to the spectrum of $H$.

\subsection{Computability of the pseudospectral inclusion bound}

In order to derive statements about computability from the existence of the pseudospectral inclusion bound, we need to relate it to our model of computation and show that the bound is actually computable from our problem definition. The following Lemma shows the computability of $\varepsilon_{L, \lambda, x}$ for a fixed $x \in \mathbb{R}^n$. We recall that a short review on BSS algorithms is given in Appendix \ref{appendixBSS}.

\begin{lemma}
\label{lemmaComputabilityUpperBound}
 For every $k\in\mathbb{N}$ and $\varepsilon>0$  there exists a   BSS algorithm $\Gamma_{\varepsilon, k}$ such that $\Gamma_{\varepsilon, k}(Q_{L, \lambda, x}) \in \mathbb{R}$ and
\begin{align}
d(\Gamma_{\varepsilon, k}(Q_{L, \lambda, x}), \varepsilon_{L, \lambda, x}) \leq 2^{-k}\,.
\label{eqGammaEpsilonClose}
\end{align}
\end{lemma}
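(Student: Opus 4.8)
The plan is to reduce the computation of the smallest singular value $\varepsilon_{L,\lambda,x} = s_1(Q_{L,\lambda,x})$ to a standard eigenvalue computation for a self-adjoint positive semi-definite matrix, and then invoke known results on the approximability of eigenvalues by BSS machines. Concretely, since $Q_{L,\lambda,x}$ is a finite rectangular matrix (its domain $\mathcal{H}_{B_L(x)}$ and codomain $\mathcal{H}_{B_{L+m}(x)}$ are finite-dimensional because $\Gamma$ is uniformly discrete), the matrix $M := Q_{L,\lambda,x}^* Q_{L,\lambda,x}$ is a square positive semi-definite matrix whose eigenvalues are exactly the squares of the singular values of $Q_{L,\lambda,x}$. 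Hence $\varepsilon_{L,\lambda,x} = \sqrt{\mu_{\min}(M)}$, where $\mu_{\min}(M)$ is the smallest eigenvalue of $M$. First I would spell out that the entries of $M$ are obtained from the matrix entries $H_{yz}$ (restricted to the relevant finite index set) by finitely many arithmetic operations, and that the index set itself, together with the site locations, is accessible via the evaluation functions $\Lambda$ of the problem definition.

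Next I would invoke the computability of eigenvalues of Hermitian matrices in the BSS model. The characteristic polynomial $p(t) = \det(M - t I)$ can be computed exactly with finitely many arithmetic operations from the entries of $M$; its smallest real root $\mu_{\min}(M)$ can then be located to within any prescribed accuracy $2^{-2k}$ by, for instance, a bisection or interval-refinement procedure, using the a priori bound $0 \le \mu_{\min}(M) \le \|Q_{L,\lambda,x}\|^2 \le \big(\|H\| + |\lambda|\big)^2$ (or a coarser, directly computable bound on $\|M\|$ such as $\|M\|_{\infty,\infty}$) to obtain a starting interval. Taking the square root of this approximation, and using that the square-root function is $\tfrac12$-Lipschitz away from $0$ — or more carefully, using $|\sqrt a - \sqrt b| \le \sqrt{|a-b|}$ in general — one converts an $O(2^{-2k})$ approximation of $\mu_{\min}(M)$ into a $2^{-k}$ approximation of $\varepsilon_{L,\lambda,x}$, which is \eqref{eqGammaEpsilonClose}. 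The dependence of the algorithm on $\varepsilon$ only enters because $\varepsilon$ may be needed to fix the working precision uniformly, but in fact the construction above gives an algorithm depending on $k$ alone; one may simply ignore the $\varepsilon$ input.

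The main obstacle is not any deep mathematics but the careful bookkeeping needed to stay within the BSS model and the specified evaluation functions: one must verify that the finite index sets $B_L(x)\cap\Gamma$ and $B_{L+m}(x)\cap\Gamma$ can be enumerated using $\Lambda$ (this is where the patch-enumeration evaluation functions of Appendix~\ref{appendixEvaluationFunctions} are used), that all arithmetic needed to assemble $M$ and $p(t)$ is exact, and that root isolation for a real polynomial with BSS-computable coefficients yields a root to arbitrary accuracy — the latter is a standard fact, e.g.\ via Sturm sequences or repeated bisection guided by sign evaluations of $p$, all of which are legal BSS operations. A secondary subtlety is that the smallest singular value is a continuous but not smooth function of the matrix; however, since we compute it via the (always continuous) smallest root of the characteristic polynomial of $M$ rather than by any iterative diagonalization, no issue arises, and the approximation guarantee \eqref{eqGammaEpsilonClose} holds unconditionally for every admissible input $Q_{L,\lambda,x}$.
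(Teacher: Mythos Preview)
Your proposal is correct and shares the paper's starting point: reduce to the smallest eigenvalue of the positive semi-definite square matrix $M=Q_{L,\lambda,x}^{*}Q_{L,\lambda,x}$, so that $\varepsilon_{L,\lambda,x}=\sqrt{\mu_{\min}(M)}$. From there the two arguments diverge. The paper exploits the equivalence $t<\varepsilon_{L,\lambda,x}\iff M-t^{2}I$ is positive definite, and decides positive definiteness exactly in the BSS model via a Cholesky factorization; it then scans a discrete set of test values $t$ and returns the first one at which positive definiteness fails. You instead compute the characteristic polynomial of $M$ exactly and isolate its smallest root by bisection or Sturm sequences, then transfer the accuracy to the square root via $|\sqrt a-\sqrt b|\le\sqrt{|a-b|}$. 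Both routes are legitimate BSS procedures; the paper's Cholesky test is slightly slicker in that it bypasses any explicit polynomial-root machinery, while your approach is the more classical numerical-analysis argument and has the advantage that the error analysis (halving an interval, then the square-root inequality) is completely transparent. Your additional remarks about assembling $M$ from the evaluation functions and about the redundancy of the parameter $\varepsilon$ are apt and in fact more explicit than what the paper writes here.
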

\begin{proof}
First, let us note that in \cite{colbrook_2019} the authors describe a method for approximating $\varepsilon_{L, \lambda, x}$ using finitely many arithmetic operations. In short, to compute a bound on the distance to the spectrum using Proposition~\ref{upperBoundTheorem}, one can use that $\varepsilon_{L, \lambda, x}$ is the square root of the smallest eigenvalue of $Q_{L, \lambda, x}^T Q_{L, \lambda, x}$. This implies that, for any $t \in \mathbb{R}$,
\begin{align*}
 t < \varepsilon_{L, \lambda, x}
 \quad\Leftrightarrow\quad
 Q_{L, \lambda, x}^T Q_{L, \lambda, x} - t^2 \text{ is positive definite.}
\end{align*}

Whether a symmetric matrix is positive definite can be decided using finitely many rational computations using the Cholesky decomposition. Recall that the Cholesky decomposition is a method of factorizig positive semidefinite matrices $A$ in the form $L^*L$, where $L$ is a lower triangular matrix. If the matrix is not positive semidefinite, this will lead to the square root of a negativ number being computed in the algorithm. Thus, we can approximate $\varepsilon_{L, \lambda, x}$ with finitely many operations.

In view of this, the formulation of the BSS method $\Gamma_{\varepsilon, k}$ is simple because for every $t$, testing whether $t < \varepsilon_{L, \lambda, x}$ is computable using a finite number of additions, multiplications and divisions. To define $\Gamma_{\varepsilon, k}$, let $S = \{ 2^{-j}\,|\, j\in \mathbb{N} \}$. The algorithm $\Gamma_{\varepsilon, k}$ can then simply check for each $t \in S$ successively whether $Q_{L, \lambda, x}^T Q_{L, \lambda, x} - t^2$ is positive definite or not, terminating with the smallest $t \in S$ for which this is not the case. (Since $\varepsilon_{L, \lambda, x}$ is a real number, there must certainly be a $t \in S$ with $t \geq \varepsilon_{L, \lambda, x}$.) We take this $t$ as the result of the BSS algorithm $\Gamma_{\varepsilon, k}$. Since $t - 1/k < \varepsilon_{L, \lambda, x} \leq t$, we know that $|t - \varepsilon_{L, \lambda, k}| < \varepsilon$, proving that \eqref{eqGammaEpsilonClose} is satisfied.
\end{proof}

\section{Spectral gap bound}
\label{sec-quasimode-detectability}

It was proven in \cite{colbrook_2019} that the pseudospectral inclusion bound $\varepsilon_{L, \lambda, x}$ always converges to $\rho_H(\lambda)$ as $L \to \infty$, at least for normal, short-range operators. However, the convergence is not uniform in the operator $H$, and thus cannot be used to provide a lower bound on $\rho_H(\lambda)$. As described in the introduction, we can work around this by instead using $\varepsilon_{L, \lambda}$, which is defined as the infimum of $\varepsilon_{L, \lambda, x}$ over all $x \in \Rn$. In this section, we prove Theorem~\ref{introThmNew} by constructing, for any quasimode $\psi$ of the infinite-volume operator $H$, a quasimode of $H$ that is restricted to a square $B_L(x)$. Taking inspiration from~\cite{chonchaiya_2011,chandlerwilde_2023,chandlerwilde_2024,lindner_unpublished}, we do so by multiplying $\psi$ with a compactly supported function of the following form.

\begin{definition}
For every $x \in \Rn$ and $L>0$ we define the \textit{tent function} $V_{L, x} : \Rn \to \mathbb{R}$ by
\begin{align*}
 V_{L, x}(y) := \max\left(0,1- \frac{\norm{x-y}}{L}\right).
\end{align*}
\end{definition}

Recall that we are using the maximum norm on $\Rn$, giving the tent a square base. For all $L > 0$ and $x \in \Rn$ the function $V_{L,x}$ is  Lipschitz continuous with Lipschitz constant $1/L$.

\begin{remark}
The simple tent function $V_{L,x}(y)$ is sufficient to prove the bound we are interested in in the form needed for our computability proof. For a  more complicated choice that leads to a  smaller constant $C$ in Theorem~\ref{introThmNew}, see \cite{chandlerwilde_2024}.
\end{remark}

In the following lemma, we provide a quantitative bound on the commutator of the tent function $V_{L,x}$ with a finite range operator $H$ averaged over $x \in \mathbb R^n$.

\begin{lemma}
\label{lemma-bound-commutator}
Let $\Gamma$ and  $H$ be as in Theorem~\ref{introThmNew}.
Then for  all $\psi \in \mathcal H$
\begin{align}
\label{eq-commutator-bound}
 \int_{x \in \Rn} \norm{[V_{L,x}, H] \psi}_{\mathcal H}^2\, \mathrm{d}x^n \leq \frac{ m^2\,M^2}{L^2}  \left( \frac{36m}{q} \right)^n\norm{V_{L, 0}}^2_{L^2(\Rn)}\,\norm{\psi}_{\mathcal H}^2 \,.
\end{align}
\end{lemma}

\begin{proof}
Let $\psi \in \mathcal H$.
Then
\begin{align*}
R \,:=&  \,\int_{x \in \mathcal \Rn} \norm{[V_{L, x}, H] \psi}_{\mathcal H}^2 \, \mathrm{d}x^n= \int_{x \in \mathcal \Rn} \sum_{p \in \Gamma} |([V_{L, x}, H] \psi)(p)|^2\, \mathrm{d}x^n \\ =& \, \int_{x \in \mathcal \Rn} \sum_{p \in \Gamma} \left| \sum_{q \in \Gamma} (V_{L, x}(p) - V_{L, x}(q)) H_{pq} \psi(q)\right|^2  \mathrm{d}x^n\\
=& \, \int_{x \in \mathcal \Rn} \sum_{p \in \Gamma} \sum_{q_1 \in \Gamma} \sum_{q_2 \in \Gamma} |V_{L, x}(p) - V_{L, x}(q_1)| |H_{p q_1}| |\psi(q_1)| |V_{L, x}(p) - V_{L, x}(q_2)| |H_{p q_2}| |\psi(q_2)|\,\mathrm{d}x^n\,.
\end{align*}
Using that $H$ has finite range $m$ we conclude that  in the above sum only terms with $d(p, q_1) < m$ and $d(p, q_2) < m$ can contribute.
Moreover, at least one of the points $p$ and $q_1$ and at least one of $p$ and $q_2$ must be in $B_L(x)$, because otherwise, $|V_{L,x}(p) - V_{L,x}(q_1)| = 0$ or $|V_{L,x}(p) - V_{L,x}(q_2)| = 0$, respectively. Hence, only terms with $p \in B_{L + m}(x)$ can contribute.
Let
\begin{align*}
S_{L, x} := \left\{(p, q_1, q_2) \in \Gamma^3 \;\middle|\; d(p, q_1) < m \;\text{and}\; d(p, q_2) < m\;\text{and}\;d(p, x) < L + m\right\}\,,
\end{align*}
then
\begin{align*}
R &= \int_{x \in \mathcal \Rn} \sum_{(p,q_1,q_2) \in S_{L,x}} |V_{L, x}(p) - V_{L, x}(q_1)| |H_{p q_1}| |V_{L, x}(p) - V_{L, x}(q_2)| |H_{p q_2}| |\psi(q_1)| |\psi(q_2)|\,\mathrm{d}x^n.
\end{align*}
Using Lipschitz continuity of  $V_{L, x}$ and $|H_{yz}|\leq M$, we obtain
\begin{align*}
R \,&\leq\, \frac{m^2\,M^2}{L^2} \int_{x \in \mathcal \Rn} \sum_{(p,q_1,q_2) \in S_{L,x}}   |\psi(q_1)| |\psi(q_2)|\,\mathrm{d}x^n.
\end{align*}
At this point, the summands do not depend on the index $p$ any more. Thus, let
\begin{align*}
T_{L, x} := \{ (q_1, q_2) \in \Gamma^2 \;|\; \exists p \in \Gamma \;\text{with}\; (p, q_1, q_2) \in S_{L,x} \}
\end{align*}
be the projection onto the latter two indices. Now for each pair $(q_1, q_2) \in T_{L, x}$, all elements $p \in \Gamma$ for which $(p, q_1, q_2) \in S_{L, x}$ must lie in the intersection
\begin{align*}\
 p \in \Gamma \cap B_m(q_1) \cap B_m(q_2) \subseteq \Gamma \cap B_{2m}(\bar q)\,,
\end{align*}
where $\bar q = \frac12(q_1 + q_2)$, by the triangle inequality. Thus, the number of such $p \in \Gamma$ is bounded by
\begin{align*}
 \# \{ p \in \Gamma \;|\; (p, q_1, q_2) \in S_{L, x} \} \leq \frac{\vol B_{2m}(0)}{\vol B_{r/2}(0)} = \left( \frac{4m}{q} \right)^n =: \gamma\,,
\end{align*}
where $q$ is the infimal distance between points of $\Gamma$. We thus have
\begin{align*}
 R \,\leq\, \frac{\gamma\,m^2\,M^2}{L^2}  \int_{x \in \mathcal \Rn} \sum_{(q_1,q_2) \in T_{L,x}} |\psi(q_1)| |\psi(q_2)|\,\mathrm{d}x^n.
\end{align*}
Now for each pair of points $(q_1, q_2) \in T_{L,x}$, we must have $d(x, q_1) < L + 2m$ and $d(x, q_2) < L + 2m$, by the triangle inequality. Thus, we can estimate the sum from above by
\begin{align*}
 R \,\leq&\; \frac{\gamma\,m^2\,M^2}{L^2} \int_{x \in \mathcal \Rn} \sum_{q_1,q_2\in B_{L+2m}(x)}
 |\psi(q_1)| |\psi(q_2)|\,\mathrm{d}x^n \\
 =&\; \frac{\gamma\,m^2\,M^2}{L^2}  \int_{x \in \mathcal \Rn} \left(\sum_{q \in B_{L + 2m}(x)} |\psi(q)| \right)^2\mathrm{d}x^n  \\
 =&\; \frac{\gamma\,m^2\,M^2}{L^2} \, \norm{|\psi| * \mathbf{1}_{B_{L+2m}(0)}}_{L^2(\Rn)}^2\,,
\end{align*}
where we define the convolution as an operator $* : \ell^2(\Gamma) \times L^2(\Rn) \to L^2(\Rn)$. We can now apply Young's inequality to obtain
\begin{align*}
R \,\leq\, \frac{\gamma\,m^2\,M^2}{L^2}\; \norm{|\psi|}_{\mathcal H}^2\, \norm{\mathbf{1}_{B_{L+2m}(0)}}_{L^1(\Rn)}^2\,=\,
\frac{\gamma\,m^2\,M^2}{L^2}\; \norm{\psi}_{\mathcal H}^2\,(2L + 4m)^n
\,.
\end{align*}
On the other hand, we have
\begin{align*}
\norm{V_{L, 0}}_{L^2(\Rn)}^2 = \left(\norm{V_{L, 0}}_{L^2(\mathbb R)}^2\right)^n = \left(\tfrac23 L\right)^n\,,
\end{align*}
and thus, using $L > m$, that
\begin{align*}
\frac{\norm{\mathbf{1}_{B_{L+2m}(0)}}_{L^1(\Rn)}^2}{\norm{V_{L, 0}}_{L^2(\Rn)}^2} =
\frac{(2L + 4m)^n}{\left(\frac23 L\right)^n}
= 3^n (1+ 2\tfrac{m}{L})^n < 9^n\,.
\end{align*}
In coclusion, we have
\[
R \,\leq\, \frac{9^n\,\gamma\,m^2\,M^2}{L^2} \norm{\psi}_{\mathcal H}^2 \norm{V_{L, 0}}^2_{L^2(\Rn)}\,.\qedhere
\]
\end{proof}

We now show how the approximate commutation of $V_{L,x}$ and $H$ in the average over $x \in \Rn$ implies that a given quasimode can be modified into a local quasimode by multiplication by one of the $V_{L, x}$.

\begin{lemma}
\label{lemma-use-commutator-bound}
Let $\Gamma$ and $H$ be as in Theorem~\ref{introThmNew}. Let $\lambda \in \mathbb{C}$, $\varepsilon > 0$ and suppose that $\psi \in \mathcal H$ is such that
\begin{align*}
 \norm{(H - \lambda)\psi} \leq \varepsilon \norm{\psi}\,.
\end{align*}
Then there exists   $x \in \Rn$ such that
\begin{align*}
\norm{(H - \lambda) V_{L,x} \psi} \leq \left(\varepsilon + \tfrac{C}{L}\right) \norm{\psi}\,,
\end{align*}
with $C=m\,M   \left( \frac{36m}{q} \right)^{n/2}$,
and thus
\begin{align*}
 \varepsilon_{L, \lambda} \leq \varepsilon_{L, \lambda, x} \leq \varepsilon + \tfrac{C}{L}\,.
\end{align*}
\end{lemma}

\begin{proof}
We first note that for any $\phi \in \mathcal H$, we have
\begin{align}
 \int_x \norm{V_{L, x}\phi}_{\mathcal H}^2\,\mathrm{d}x^n = \norm{\phi}_{\mathcal H}^2\, \norm{V_{L,0}}_{L^2(\Rn)}^2\,.
 \label{eq-int-Vx}
\end{align}
By Minkowski's inequality,
\begin{align*}
 \int_{x \in \Rn} \norm{(H - \lambda )& V_{L,x} \psi}_{\mathcal H}^2  \,\mathrm{d}x^n  \;=\;  \int_{x \in \Rn} \norm{V_{L,x} (H - \lambda) \psi - [V_{L,x}, H] \psi}_{\mathcal H}^2\,\mathrm{d}x^n \\
&\;\leq\; \left(\sqrt{\int_{x \in \Rn} \norm{V_{L,x} (H - \lambda) \psi}^2\,\mathrm{d}x^n } + \sqrt{\int_{x \in \Rn} \norm{[V_{L,x}, H] \psi}^2\,\mathrm{d}x^n }\right)^2.
\end{align*}
Applying \eqref{eq-int-Vx} to the first term, we find
\begin{align*}
\sqrt{\int_{x \in \Rn} \norm{V_{L,x} (H - \lambda ) \psi}_{\mathcal H}^2\,\mathrm{d}x^n } &= \norm{V_{L,0}}_{L^2(\Rn)} \norm{(H - \lambda ) \psi}_{\mathcal H} \\
&\leq \norm{V_{L,0}}_{L^2(\Rn)}\, \varepsilon\, \norm{\psi}_{\mathcal H}\,.
\end{align*}
Combining this with \eqref{eq-commutator-bound} applied to the second term, we obtain
\begin{align*}
\sqrt{\int_{x \in \Rn} \norm{(H - \lambda ) V_{L,x} \psi}^2\,\mathrm{d}x^n } \leq (\varepsilon + \tfrac{C}{L}) \norm{V_{L,0}}_{L^2(\Rn)} \norm{\psi}_{\mathcal H}\,.
\end{align*}
We can now apply \eqref{eq-int-Vx} again to get
\begin{align*}
 \int_{x \in \Rn} \norm{(H - \lambda ) V_{L,x} \psi}^2 \,\mathrm{d}x^n \;\leq\; \int_{x \in \Rn} (\varepsilon + \tfrac{C}{L})^2 \norm{V_{L,x} \psi}^2\,\mathrm{d}x^n \,.
\end{align*}
It follows that there exists an $x \in \mathbb{R}^n$ such that
\begin{align*}
\norm{(H - \lambda ) V_{L,x} \psi}^2 \leq (\varepsilon + \tfrac{C}{L})^2 \norm{V_{L,x} \psi}^2
\end{align*}
and thus
\begin{align*}
\norm{(H - \lambda) V_{L,x} \psi} \leq (\varepsilon + \tfrac{C}{L}) \norm{V_{L,x} \psi}\,.
\end{align*}
The function $ \tilde \psi := V_{L, x} \psi$ lies in $\mathcal{H}_{B_L(x)}$
and    applying to $\tilde \psi$ the operator $Q_{L, \lambda, x}$ of Definition~\ref{def:Q} we obtain
\begin{align*}
 \norm{Q_{L, \lambda, x} \tilde \psi} = \norm{(H - \lambda ) \tilde\psi}_{\mathcal H} \leq (\varepsilon + \tfrac{C}{L}) \norm{\tilde \psi}_{\mathcal H}\,.
\end{align*}
Hence, by the definition of $\varepsilon_{L, \lambda, x}$ and  $\varepsilon_{L, \lambda}$, it follows that
\[
\varepsilon_{L, \lambda}\leq  \varepsilon_{L, \lambda, x} \leq \varepsilon + \tfrac{C}{L}\,.\qedhere
\]
\end{proof}

We can now combine these results to prove Theorem~\ref{introThmNew}.

\begin{proof}\textit{of Theorem~\ref{introThmNew}.}
 Fix $\lambda \in \mathbb C$. For any $\varepsilon > \rho_H(\lambda)$, by the definition of the lower norm function, we can find a quasimode $\psi \in \mathcal H$ such that
\begin{align*}
 \norm{(H - \lambda)\psi} \leq \varepsilon \norm{\psi}\,.
\end{align*}
  Lemma~\ref{lemma-use-commutator-bound} then implies
\begin{align*}
 \varepsilon_{L, \lambda} \leq \varepsilon + \tfrac{C}{L}\,.
\end{align*}
Since this holds for all $\varepsilon > \rho_H(\lambda)$, we obtain
\begin{align*}
 \varepsilon_{L, \lambda} - \tfrac{C}{L} \;\leq\; \rho_H(\lambda)\,,
\end{align*}
  proving the theorem.
\end{proof}

\section{Computability of \texorpdfstring{$\rho_H$}{rho(H)}}
\label{sec-computability-lower-norm}

In this section, we show that the lower norm function $\rho_H(\lambda)$ can be computed with error control, for any \emph{flc} operator $H$ and $\lambda \in \mathbb{C}$. In fact this is a fairly straightforward corollary of Theorems~\ref{introThmNew} and~\ref{upperBoundTheorem}.

\begin{corollary}\label{approximability}
Let $\lambda \in \mathbb{C}$ and $\tau > 0$. Then there exists a BSS algorithm which, given an \emph{flc} discrete operator $H$ of finite range, and using the evaluation functions $\Lambda_{\rm fr}$ of Proposition~\ref{proposition:FReductionS}, computes a value $\tilde\rho_H(\lambda, \tau)$ such that
\begin{align}
\label{eqWhatTildeRhoFulfills}
|\rho_H(\lambda) - \tilde\rho_H(\lambda, \tau)| \leq \tau\,.
\end{align}
\end{corollary}
\begin{proof}
For any given operator $H$, we can use the evaluation function $f_6(H)$ defined in Condition~\ref{definitionEvaluationFunctions}   to compute an upper bound on $|H_{xy}|$ as follows:
\begin{align*}
|H_{xy}| \leq M d(x,y)^{-(n+\varepsilon)} \leq M = f_6(H)\qquad \mbox{ for all }x,y\in\Gamma \,.
\end{align*}
Note that this bound holds uniformly also
after the truncation procedure  described in Section~\ref{sec-finite-range}. Using the enumeration of local patches given by the functions $f_{1,L}, f_{2,L,m}$ and $f_{4,L,n,k,i}(H)$ for a large enough $L$, the packing distance $q$ of the underlying point set $\Gamma$ can also be computed.

We can thus compute everything we need to determine the constant $C$ of Theorem~\ref{introThmNew}. Having determined $C$ for a given operator $H$, we then set
\begin{align*}
L_c := \frac{C}{\tau}
\end{align*}
and compute
\begin{align*}
\tilde \rho_{H}(\lambda, \tau) := \varepsilon_{L_c, \lambda}\,.
\end{align*}
We have established in Lemma~\ref{lemmaComputabilityUpperBound} that $\varepsilon_{L, \lambda}$ can be computed by a BSS algorithm using the evaluation functions of Condition~\ref{definitionEvaluationFunctions}.

It now follows from Theorems~\ref{introThmNew} and~\ref{upperBoundTheorem} that we have
\begin{align*}
\varepsilon_{L_c, \lambda} - \frac{C}{L_c} \leq \rho_H(\lambda) \leq \varepsilon_{L_c, \lambda}.
\end{align*}
Thus,
\begin{align*}
|\rho_H(\lambda) - \tilde\rho_H(\lambda, \tau)| \leq \frac{C}{L_c} = \tau\,,
\end{align*}
proving the corollary.
\end{proof}

Our algorithms for computing the spectrum and $\varepsilon$-pseudospectrum of \textit{flc} operators $H$ will be based on approximating $\rho_H(\lambda)$ at different points $\lambda \in \mathbb{C}$.
In fact, the computability of approximations $\tilde\rho_{H}(\lambda,\tau)$ fulfilling \eqref{eqWhatTildeRhoFulfills} are all that we use about the operator $H$ in showing the computability of the (pseudo-) spectrum. The same methods could therefore also be applied to show the computability of the (pseudo-) spectrum with Hausdorff error control for any other class of operators where the lower norm function $\rho_H(\lambda)$ can be approximated with error control.

\section{Computability of the spectrum for normal operators}
\label{sec-computability-spectrum}

In this section, we will prove Theorem~\ref{theoremComputabilitySpectrum}, the computability of the spectrum for normal operators. Our proof will be based on Corollary~\ref{approximability}, the computability of $\rho_H(\lambda)$. Note that the normal spectral problem is not a special case of the pseudospectral problem because we can only compute the $\varepsilon$-pseudospectrum for $\varepsilon > 0$. In addition to normal operators, the method described below could easily be extended, as in \cite{colbrook_2019}, to classes of operators fulfilling a bound of the form
\begin{align}
  \rho_H(\lambda) \geq g(d(\lambda, \Spec(H)))
\end{align}
for some strictly increasing continuous function $g$.

\begin{proof}[Proof of Theorem~\ref{theoremComputabilitySpectrum}]

Let the operator $H$ and $k \in \mathbb{N}$ be given. We define the desired accuracy as $\tau := 2^{-k}$.

We know that $H$ is bounded, and the algorithm can access a bound on the norm of $H$ via the evaluation function $f_5 \in \Lambda$ (see Definition~\ref{definitionEvaluationFunctions}). Let $M := f_5(H)$ be this bound and define  the restricted  square grid
\begin{align}
\label{eqDefinitionTtau}
  T_\tau := B_M(0) \cap \left(\frac\tau4 \sqrt{2}\cdot \mathbb{Z}^2\right) \subseteq \mathbb{C}\,,
\end{align}
 where we use the natural identification $\mathbb{R}^2 \simeq \mathbb{C}$.
The side length of the rectangular grid is chosen such that $T_\tau$ has covering radius $\tau/4$ inside $B_M(0)$. We will approximate the spectrum by a subset of $T_\tau$.

For every $\lambda \in T_\tau$, we can use the algorithm of Corollary~\ref{approximability} to compute an approximation $\tilde\rho_{H}(z, \tau/4)$ to $\rho_H(z)$.
We define our approximation $\tilde\Gamma(H,\tau)$ of the spectrum as
\begin{align}
\label{eqDefinitionTildeGamma}
  \tilde\Gamma(H,\tau) := \Big\{\,\lambda \in T_\tau \;\Big|\; \tilde\rho_{H}(\lambda, \tau/4) < \tau/2\;\Big\}\,.
\end{align}
We will now show that $d_H(\tilde\Gamma(H,\tau), \Spec(H)) \leq \tau$ in Hausdorff distance.

First, let $\lambda \in \Spec(H)$. Then $\rho_H(\lambda)=0$. Because we have chosen $T_\tau$ to have covering radius $\tau/4$ inside $B_M(0)$, there exists a point $\lambda' \in T_\tau$ with $|\lambda - \lambda'| < \tau/4$. Because $\rho_H$ is Lipschitz continuous with Lipschitz constant~$1$ (see, for example, \cite{noteOnHausdorff}, Lemma 2.1), it follows that $|\rho_H(\lambda)-\rho_H(\lambda')| < \tau / 4$. By Corollary~\ref{approximability}, we have  $|\rho_H(\lambda')- \tilde \rho_H(\lambda', \tau/4)| < \tau/4$. The triangle inequality thus implies
\[
\tilde \rho_{H}(\lambda',\tau/4) \leq \big|\tilde \rho_{H}(\lambda',\tau/4) - \rho_H(\lambda')\big| + \big|\rho_H(\lambda')- \rho_H(\lambda)\big| + \rho_H(\lambda) < \tau/2\,,
\]
where we have used $\rho_H(\lambda)=0$. Hence,  $\lambda' \in  \Gamma_k(H)$. Because $\lambda$ was arbitrary, we have $d(\lambda,  \tilde\Gamma(H,\tau)) \leq \tau/2$ for any $\lambda \in \Spec(H)$.

On the other hand, for any $\lambda \in  \tilde\Gamma(H, \tau)$, the condition $\tilde \rho_{H}(\lambda,\tau/4) < \tau/2$ in~\eqref{eqDefinitionTildeGamma}
implies that $\rho_H(\lambda) < \tau$ by Corollary~\ref{approximability}. But for normal operators, $\rho_H(\lambda) = d(\lambda, \Spec(H))$, so we have $d(\lambda, \Spec(H)) \leq \tau$.

Summing up, for any $\lambda \in \Spec(H)$ we have $d(\lambda, \tilde\Gamma(H,\tau)) \leq \tau/4$ and for any $\lambda' \in \tilde\Gamma(H,\tau)$ we have $d(\lambda', \Spec(H)) \leq \tau$. Thus, we have shown that $d_\mathrm{H}(  \tilde\Gamma(H,\tau), \Spec(H)) \leq \tau$. The proof of Theorem~\ref{theoremComputabilitySpectrum} then follows simply by choosing $\Gamma_k(H) := \tilde\Gamma(H, 2^{-k})$.
\end{proof}

\section{Computability of the pseudospectrum}
\label{sec-computability-pseudospectrum}

In this section, we will prove Theorem~\ref{theoremComputabilityPseudospectrum}, the computability of the $\varepsilon$-pseudospectrum for potentially non-normal operators and for $\varepsilon > 0$.
A related result about pseudospectra is \cite{noteOnHausdorff}, in which it is shown that the pointwise convergence of the lower norm and the lower norm of the adjoint implies the convergence of the $\varepsilon$-pseudospectrum in Hausdorff distance. In the following, we show an analogous result about computability, namely that the $\varepsilon$-pseudospectrum can be computed in Hausdorff distance if arbitrarily extact estimates of the lower norm are possible. We  do not require computation of the lower norm of the adjoint. In another related result \cite{discreteGroups}, the constancy of the spectrum in minimal dynamical systems has also been extended to pseudospectra, which might provide an alternative avenue for computing pseudospectra rigorously.

As in the proof of Theorem~\ref{theoremComputabilitySpectrum}, we will choose a sufficiently fine grid $T_\tau$ on which to compute the approximations $\tilde \rho_{H}(\lambda, \tau)$. However, in the non-normal case, the scale $\tau$ cannot be determined \textit{a priori}.
Instead, we will simultaneously decrease the maximum error $\tau$ of the approximation $\tilde \rho_H(\lambda, \tau)$ and the spacing of the grid $T_\tau$ on which it is computed until a certain condition is met.

As a first step, Proposition~\ref{propSRU} in Subsection~\ref{subsec-condition} will show that a certain easily computable criterion is sufficient to guarantee that the spectrum is approximated well enough for a given~$\tau$.

\subsection{Approximating  the pseudospectrum}
\label{subsec-condition}

To compute the $\varepsilon$-pseudospectrum, we will evaluate $\tilde\rho_H(\lambda, \tau)$ for all $\lambda \in T_\tau$, where $T_\tau$ is the following lattice:
\begin{align}
\label{eqDefinitionTtauPseudo}
T_\tau &:= \tau\sqrt{2}\mathbb{Z}^2 \,\cap\, B_{M + \varepsilon}(0) \subseteq \mathbb{C} \, .
\end{align}
This definition is very close to \eqref{eqDefinitionTtau}, except that the factor $\frac14$ is not necessary in the present case. We then decompose $T_\tau$ into the following three disjoint sets:
\begin{align}
\label{eqDefSTau}
S_\tau &:= \{\, \lambda \in T_\tau \;|\; \tilde\rho_{H}(\lambda,\tau) < \varepsilon - \tau\,\}\,,\\
\label{eqDefRTau}
R_\tau &:= \{\, \lambda \in T_\tau \;|\; \tilde\rho_{H}(\lambda, \tau) > \varepsilon + 2\tau \,\}\,,\\
\label{eqDefUTau}
U_\tau &:= T_\tau \;\backslash\; (S_\tau \cup R_\tau) \, .
\end{align}
It is easy to see, using Corollary~\ref{approximability}, that all points $S_\tau$ are certainly in $\Spec_\varepsilon(H)$, while all points in $R_\tau$ are certainly not in $\Spec_\varepsilon(H)$. The following proposition shows a simple condition on $S_\tau$, $R_\tau$ and $U_\tau$ that, if fulfilled, guarantees that $S_\tau$ will be a good approximation of the spectrum.

\begin{proposition}
\label{propSRU}
Let $H$ be an \emph{flc} operator and $\delta,\tau > 0$. Let $S_\tau$ and $U_\tau$ be the sets defined in \eqref{eqDefSTau} and \eqref{eqDefUTau}. Then, if
\begin{align}
\label{conditionSRU}
d(\lambda, S_\tau) < \delta - \tau \qquad\text{for all }\lambda \in U_\tau\,,
\end{align}
we have
\begin{align*}
 d_H(S_\tau, \Spec_\varepsilon(H)) < \delta\,.
\end{align*}
\end{proposition}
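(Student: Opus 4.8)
The plan is to establish the two inclusions that make up the Hausdorff bound $d_H(S_\tau, \Spec_\varepsilon(H)) < \delta$ separately. The easy direction is $\sup_{\lambda \in S_\tau} d(\lambda, \Spec_\varepsilon(H))$: for every $\lambda \in S_\tau$ we have $\tilde\rho_H(\lambda,\tau) < \varepsilon - \tau$, so by Corollary~\ref{approximability} (which gives $|\rho_H(\lambda) - \tilde\rho_H(\lambda,\tau)| \leq \tau$) we get $\rho_H(\lambda) < \varepsilon$, hence $\lambda \in \Spec_\varepsilon(H)$ and therefore $d(\lambda, \Spec_\varepsilon(H)) = 0 < \delta$. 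So $S_\tau \subseteq \Spec_\varepsilon(H)$ and the first half of the Hausdorff distance vanishes.

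The harder direction is to show $\sup_{z \in \Spec_\varepsilon(H)} d(z, S_\tau) < \delta$, i.e.\ that every point $z$ of the true $\varepsilon$-pseudospectrum has a point of $S_\tau$ within distance $\delta$. First I would check that $\Spec_\varepsilon(H)$ actually lies inside the region $B_{M+\varepsilon}(0)$ over which the grid $T_\tau$ was laid down: if $\rho_H(z) \leq \varepsilon$ then $d(z,\Spec(H)) \geq \rho_H(z)$-type reasoning together with $\|H\| \leq M$ gives $|z| \leq M + \varepsilon$ (any $\lambda$ with $|\lambda| > \|H\|$ is in the resolvent set with $\rho_H(\lambda) \geq |\lambda| - \|H\|$, so $\rho_H(z) \leq \varepsilon$ forces $|z| \leq M + \varepsilon$). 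Hence there is a grid point $\lambda \in T_\tau$ with $|z - \lambda| \leq \tau$ (the grid $\tau\sqrt2\,\mathbb{Z}^2$ has covering radius $\tau$). Now I classify this $\lambda$: it is in $S_\tau$, $R_\tau$, or $U_\tau$. It cannot be in $R_\tau$, because $\tilde\rho_H(\lambda,\tau) > \varepsilon + 2\tau$ would give $\rho_H(\lambda) > \varepsilon + \tau$ by Corollary~\ref{approximability}, but $\rho_H$ is $1$-Lipschitz and $\rho_H(z) \leq \varepsilon$, so $\rho_H(\lambda) \leq \varepsilon + |z-\lambda| \leq \varepsilon + \tau$, a contradiction. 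So $\lambda \in S_\tau \cup U_\tau$.

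In the case $\lambda \in S_\tau$ we are done immediately: $d(z, S_\tau) \leq |z - \lambda| \leq \tau < \delta$ (assuming $\tau < \delta$, which is implicit since the hypothesis $d(\lambda,S_\tau) < \delta - \tau$ requires $\delta > \tau$). In the case $\lambda \in U_\tau$, the hypothesis \eqref{conditionSRU} applies and gives a point $\mu \in S_\tau$ with $d(\lambda, \mu) < \delta - \tau$; then the triangle inequality yields
\begin{align*}
d(z, S_\tau) \leq |z - \mu| \leq |z - \lambda| + |\lambda - \mu| < \tau + (\delta - \tau) = \delta\,.
\end{align*}
Combining both directions, $d_H(S_\tau, \Spec_\varepsilon(H)) < \delta$.

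The main obstacle, and the only spot requiring a little care, is the bookkeeping with the three sublevel thresholds: one must verify that the gaps $\varepsilon - \tau$, $\varepsilon + 2\tau$ in the definitions of $S_\tau$ and $R_\tau$ are exactly the right size so that (i) every grid point near a true pseudospectral point is excluded from $R_\tau$ via the $1$-Lipschitz property plus the $\tau$-error of $\tilde\rho_H$, and (ii) points in $S_\tau$ are genuinely inside $\Spec_\varepsilon(H)$. A secondary point worth stating explicitly is the geometric fact that $\Spec_\varepsilon(H) \subseteq B_{M+\varepsilon}(0)$, so that the finite grid $T_\tau$ really does cover all of the true pseudospectrum with covering radius $\tau$; without this, a far-away point of $\Spec_\varepsilon(H)$ could fail to have a nearby grid point. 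Everything else is routine triangle-inequality manipulation.
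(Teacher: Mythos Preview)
Your proof is correct and follows essentially the same approach as the paper's own proof: establish $S_\tau \subseteq \Spec_\varepsilon(H)$ from the definition and Corollary~\ref{approximability}, then for each $z \in \Spec_\varepsilon(H)$ pick a nearby grid point, rule out $R_\tau$ via the $1$-Lipschitz property of $\rho_H$ plus the $\tau$-approximation bound, and handle the $S_\tau$ and $U_\tau$ cases by the triangle inequality. You even supply an explicit justification for $\Spec_\varepsilon(H) \subseteq B_{M+\varepsilon}(0)$, which the paper leaves as ``easy to show''; the paper in turn invokes the external fact that the algorithm always chooses $\tau \leq \delta$, whereas you extract $\tau < \delta$ from the hypothesis (note this extraction is only valid when $U_\tau \neq \varnothing$, but the intended use has $\tau < \delta$ in any case).
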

\begin{proof}
By definition of $S_\tau$, we have $S_\tau \subseteq \Spec_\varepsilon(H)$. Thus, the inequality for the Hausdorff distance reduces to showing that $d(\lambda, S_\tau) \leq \delta$ for all $\lambda \in \Spec_\varepsilon(H)$.

Let $\lambda \in \Spec_\varepsilon(H)$ be given. It is easy to show that $\lambda \in B_{M + \varepsilon}(0)$. The lattice $T_\tau$ was chosen to have covering radius $\tau$ in $B_{M + \varepsilon}(0)$, so we can find a $\lambda' \in T_\tau$ with $d(\lambda, \lambda') \leq \tau$. Consider now the additional set $R_\tau$ defined in \eqref{eqDefRTau}. It is clear from the definitions of $S_\tau$, $R_\tau$ and $U_\tau$, \eqref{eqDefSTau}, \eqref{eqDefRTau} and \eqref{eqDefUTau} respectively, that they are a disjoint decomposition of $T_\tau$. Thus, $\lambda'$ must be in exactly one of these sets.

Of the three options, $\lambda' \in R_\tau$ is actually impossible. That is because we have $\lambda \in \Spec_\varepsilon(H)$, so $\rho_H(\lambda) \leq \varepsilon$. The Lipschitz continuity of $\rho_H$ implies that $\rho_H(\lambda') \leq \varepsilon + \tau$. From Corollary~\ref{approximability}, we then get $\tilde\rho_H(\lambda', \tau) \leq \varepsilon + 2 \tau$. This means $\lambda' \notin R_\tau'$.

If we have $\lambda' \in S_\tau$, then $d(\lambda, S_\tau) \leq \tau$. But the algorithm always chooses $\tau \leq \delta$, so we have $d(\lambda, S_\tau) \leq \delta$ in this case.

Finally, suppose that $\lambda' \in U_\tau$. By the hypothesis \eqref{conditionSRU}, we have $d(\lambda', S_\tau) < \delta - \tau$. Moreover $d(\lambda, \lambda') \leq \tau$, thus we have $d(\lambda, S_\tau) \leq \delta$ also in this case. Therefore $d(\lambda, S_\tau) \leq \delta$ is fulfilled for every $\lambda \in \Spec_\varepsilon(H)$.
\end{proof}

\subsection{The computational algorithm}

Because the sets $S_\tau$ and $U_\tau$ are always finite, condition~\eqref{conditionSRU} from Proposition~\ref{propSRU} can be checked in finite time, given the sets. Because this condition is sufficient to show that $S_\tau$ is within distance~$\delta$ of~$\Spec_\varepsilon(H)$, we can compute the $\varepsilon$-pseudospectrum by simply checking smaller and smaller $\tau > 0$, as in the following algorithm.

\begin{alg}[Approximate the $\varepsilon$-pseudospectrum of $H$ with maximum error $\delta$]\label{algPseudospectrum}~\\\vspace*{-0.3cm}
\begin{algorithmic}
  \State Compute an upper bound $M$ on the norm of $H$
  \State Compute the lattice $T_\tau := \tau\sqrt{2}\mathbb{Z}^2 \,\cap\, B_{M + \varepsilon}(0) $
  \For{$j \gets 1, 2, 3, \dots$}
  \State Set $\tau = 2^{-j}\cdot \delta$
  \State compute $S_\tau$ and $U_\tau$ according to Equations~\eqref{eqDefSTau} and~\eqref{eqDefUTau}
  \If{condition~\eqref{conditionSRU} holds}
  \State \Return $S_\tau$
  \EndIf
  \EndFor
\end{algorithmic}
\end{alg}
Proposition~\ref{propSRU} shows that if  Algorithm~\ref{algPseudospectrum} terminates, the result  fulfills $d_H(S_\tau, \Spec_\varepsilon(H)) < \delta$. To complete the proof of Theorem~\ref{theoremComputabilityPseudospectrum} it remains to show that Algorithm~\ref{algPseudospectrum} always terminates. In other words, we need to show that for any \emph{flc} operator~$H$, as well as $\varepsilon$ and $\delta$, condition~\eqref{conditionSRU} is fulfilled for $\tau$ small enough. This will be proven in the following subsection.

\subsection{Proof that the algorithm terminates}

\begin{lemma}
\label{lemmaTermination}
Let an \emph{flc} operator $H$ and $\varepsilon,$ $\delta > 0$ be given. Then there exists a $\tau' > 0$ such that for all $0 < \tau < \tau'$, the sets $S_\tau$ and $U_\tau$ defined in equations~\eqref{eqDefSTau} and~\eqref{eqDefUTau} fulfill the condition~\eqref{conditionSRU} in Proposition~\ref{propSRU}.
\end{lemma}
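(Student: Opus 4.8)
The plan is to recognise that, by construction, the ``undecided'' points $\lambda \in U_\tau$ are exactly those trapped within $O(\tau)$ of the level set $\{\rho_H = \varepsilon\}$, and then to use the Hausdorff-continuity of $\varepsilon' \mapsto \Spec_{\varepsilon'}(H)$ from Lemma~\ref{lemmaPreTau} to step from such a point to a point lying \emph{strictly} inside $\Spec_\varepsilon(H)$ and only $O(\delta)$ away; snapping that interior point to the grid $T_\tau$ then produces an element of $S_\tau$ witnessing~\eqref{conditionSRU}.

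First I would extract the two elementary consequences of Corollary~\ref{approximability}. If $\lambda\in U_\tau$ then $\lambda\notin R_\tau$, so $\tilde\rho_H(\lambda,\tau)\le\varepsilon+2\tau$ and hence $\rho_H(\lambda)\le\varepsilon+3\tau$; thus every point of $U_\tau$ lies in $\Spec_{\varepsilon+3\tau}(H)$. Conversely, any grid point $\mu'\in T_\tau$ with $\rho_H(\mu')<\varepsilon-2\tau$ satisfies $\tilde\rho_H(\mu',\tau)\le\rho_H(\mu')+\tau<\varepsilon-\tau$, so $\mu'\in S_\tau$ by~\eqref{eqDefSTau}.

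Next, apply Lemma~\ref{lemmaPreTau} with $\delta/3$ in place of $\delta$ to obtain a $\tau''>0$ with $d_\mathrm{H}(\Spec_{\varepsilon-\tau''}(H),\Spec_{\varepsilon+\tau''}(H))\le\delta/3$, and set $\tau':=\min\{\tau''/3,\delta/4\}$; note that $\tau''$ is only supplied non-constructively, which is harmless since the algorithm merely keeps halving $\tau$. Now fix $0<\tau<\tau'$ and $\lambda\in U_\tau$. Since $\rho_H(\lambda)\le\varepsilon+3\tau<\varepsilon+\tau''$, we have $\lambda\in\Spec_{\varepsilon+\tau''}(H)$, so the Hausdorff bound gives a $\mu\in\Spec_{\varepsilon-\tau''}(H)$ with $|\lambda-\mu|\le\delta/3$, in particular $\rho_H(\mu)\le\varepsilon-\tau''$. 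A norm estimate of the type used in the proof of Proposition~\ref{propSRU} shows $\mu$ lies strictly inside $B_{M+\varepsilon}(0)$, so by the covering-radius property of $T_\tau$ there is a grid point $\mu'\in T_\tau$ with $|\mu-\mu'|\le\tau$; the Lipschitz-$1$ bound for $\rho_H$ then yields $\rho_H(\mu')\le\varepsilon-\tau''+\tau<\varepsilon-2\tau$, using $3\tau<\tau''$. By the second observation above, $\mu'\in S_\tau$, and therefore $d(\lambda,S_\tau)\le|\lambda-\mu|+|\mu-\mu'|\le\delta/3+\tau<\delta-\tau$, which is exactly condition~\eqref{conditionSRU}.

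I expect the conceptual crux to be the passage through Lemma~\ref{lemmaPreTau}: the point is that there is a uniform (if non-explicit) amount $\tau''$ by which one may lower the pseudospectral level and still move every point of $\Spec_{\varepsilon+\tau''}(H)$ a controlled distance into $\Spec_{\varepsilon-\tau''}(H)$. This rules out the potentially fatal scenario in which $\rho_H$ has an arbitrarily thin ``plateau'' at height $\varepsilon$ that $U_\tau$ could straddle with no nearby point of $S_\tau$; it is precisely here that the subharmonicity of $\rho_H^{-1}$ invoked in the proof of Lemma~\ref{lemmaPreTau} does the real work. The remaining steps are bookkeeping with the constants, which I have arranged above so that~\eqref{conditionSRU} holds with strict inequality and a little slack.
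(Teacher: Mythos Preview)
Your proof is correct and follows essentially the same strategy as the paper: invoke Lemma~\ref{lemmaPreTau} to place every $\lambda\in U_\tau$ close to a point deep in the pseudospectrum, then use the Lipschitz bound on $\rho_H$ together with Corollary~\ref{approximability} to produce a witness in $S_\tau$. The only real difference is in the last step: the paper moves the point $\lambda''$ a distance $\tau$ back along the segment toward $\lambda$ to obtain $\lambda'$ with $d(\lambda,\lambda')<\delta-\tau$, whereas you apply Lemma~\ref{lemmaPreTau} at scale $\delta/3$ and then snap $\mu$ to the nearest grid point $\mu'\in T_\tau$. Your version is arguably the more careful one, since $S_\tau$ is by definition a subset of the lattice $T_\tau$, and the paper's $\lambda'$ is not obviously a lattice point; your explicit snapping, together with the buffer $\tau<\tau''/3$, closes that gap cleanly.
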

\begin{proof}
By Lemma~\ref{lemmaPreTau}, there exists a $\tau' > 0$ such that for all $0 < \tau < \tau'$,
\begin{align}
 d_H(\Spec_{\varepsilon-3\tau'}(H), \Spec_{\varepsilon+3\tau'}(H)) < \delta\,.
 \label{eqThreeTau}
\end{align}
(We can choose a smaller $\delta$ in Lemma~\ref{lemmaPreTau} to obtain the strict inequality.)

Now let $\lambda \in U_\tau$ be arbitrary. By definition of $U_\tau$, we have
\begin{align}
 \tilde\rho_H(\lambda, \tau) \leq \varepsilon + 2 \tau\,,
\end{align}
because otherwise we would have $\lambda \in R_\tau$. From Corollary \ref{approximability}, we can deduce
\begin{align}
 \rho_H(\lambda) < \varepsilon + 3 \tau\,.
\end{align}
Now by \eqref{eqThreeTau}, we know that there exists a $\lambda'' \in \Spec_{\varepsilon-3\tau}(H)$ with $d(\lambda, \lambda'') < \delta$. This $\lambda''$ hence fulfills
\begin{align}
\label{whatLambdaPPfulfills}
 \rho_H(\lambda'') \leq \varepsilon - 3 \tau\,.
\end{align}
Now we will set
\begin{align}
 \lambda' = \lambda'' + \tau \frac{\lambda - \lambda''}{|\lambda - \lambda''|}\,.
\end{align}
The point $\lambda'$ lies on the line connecting $\lambda$ and $\lambda''$, at a distance $\tau$ from $\lambda''$ in the direction of $\lambda$. Clearly, we have
\begin{align}
 d(\lambda', \lambda) < \delta - \tau\,,
 \label{eqLambdaDist}
\end{align}
because $d(\lambda, \lambda'') < \delta$. On the other hand, because $\rho_H$ is Lipschitz continuous with constant $\leq 1$ and $d(\lambda', \lambda'') = \tau$, we have
\begin{align}
 \rho_H(\lambda') \leq \rho_H(\lambda'') + \tau \leq \varepsilon - 2 \tau\,.
\end{align}
where we used \eqref{whatLambdaPPfulfills} in the second inequality. Corollary~\ref{approximability} again gives
\begin{align}
 \tilde\rho_H(\lambda', \tau) \leq \varepsilon - \tau\,.
\end{align}
It follows that $\lambda' \in S_\tau$. We have shown $d(\lambda', \lambda) < \delta - \tau$ in \eqref{eqLambdaDist}, so $d(\lambda, S_\tau) < \delta - \tau$. But $\lambda \in U_\tau$ was arbitrary, proving the Lemma.
\end{proof}

\begin{proof}[Proof of
Theorem~\ref{theoremComputabilityPseudospectrum}]
The solution to the computational problem $(\Omega_{\rm ps},$ $\Lambda_{\rm ps},$ $(\mathcal M, d),$ $\Xi_{\rm ps})$ is given by the BSS algorithm $\Gamma_k(H, \varepsilon)$ that is just the application of Algorithm~\ref{algPseudospectrum} to the given $H$, $\varepsilon$ and $\delta = 2^{-k}$.

By Lemma~\ref{lemmaTermination}, there exists a $\tau' > 0$ such that condition~\eqref{conditionSRU} is fulfilled for all $0 < \tau < \tau'$. Thus, Algorithm~\ref{algPseudospectrum} will terminate after at most $\log_2(\max(1, \delta / \tau'))$ iterations. After the algorithm has terminated, the sets $S_\tau$ and $U_\tau$ fulfill condition \eqref{conditionSRU}, and therefore, by Proposition~\ref{propSRU}, we have
\begin{align*}
d_H(\Gamma_k(H, \varepsilon), \Xi_{\rm ps}(H, \varepsilon)) = d_H(S_\tau, \Spec_\varepsilon(H)) < \delta = 2^{-k}\,.
\end{align*}
This completes the proof of Theorem~\ref{theoremComputabilityPseudospectrum}.
\end{proof}

\section*{Acknowledgements}

We would like to thank Marko Lindner for sharing with us the unpublished manuscript \cite{lindner_unpublished} and for valuable suggestions. We are grateful to Siegfried Beckus,  Anuradha Jagannathan,  Johannes Kellendonk, and Lior Tenenbaum for helpful comments. The work of M.M.\ has been supported by a fellowship of the Alexander von Humboldt Foundation during his stay at the University of T\"ubingen, where this work initiated.
M.M. gratefully acknowledges the support of PNRR Italia Domani and Next Generation EU through the ICSC National Research Centre for High Performance Computing, Big Data and Quantum Computing and the support of the MUR grant Dipartimento di Eccellenza 2023–2027. S.T.\ acknowledges financial support by the Deutsche Forschungsgemeinschaft (DFG, German Research Foundation) – TRR 352 – Project-ID 470903074.

\appendix
\section{Elementary lemmas}
\label{sec-elementary}

This Appendix collects several Lemmas with simple proofs that have been moved here to shorten the main text.

In proving the reduction to finite range in Section~\ref{sec-finite-range}, we used the following lemma, which provides a bound on the sum of polynomially decaying hoppings starting at a given distance.

\decayLemma*

\begin{proof}
Without loss of generality, we can assume that $m>l$. To compute the sum in \eqref{eqDistpowers}, we use the Cauchy-Maclaurin criterion adapted to our setting in same spirit as in \cite{marcelli_2023}. The main idea is to estimate the sum from above by using an integral. First, notice that $d(x,y)\leq \|x-y\|_2 \leq  d(x,y) \sqrt{n}$ where $\|\cdot\|_2$ denotes the usual Euclidean norm in $\mathbb{R}^n$. Then, the function $\mathbb{R}^n \ni y \mapsto d(x,y)^{-(n+\varepsilon)}$ is such that $d(x,y)^{-(n+\varepsilon)}\leq n^{(n+\varepsilon)/2} \|x-y\|_2^{-(n+\varepsilon)}$. For every $y \in \Gamma$ such that $d(x,y) > m$, let $\tilde{B}^{(x)}_{l/4}(y)$ be a closed (possibly rotated) hypercube of side length $l/2$ such that $y$ sits on one of its corner and for every $z \in \tilde{B}^{(x)}_{l/4}(y)$ it holds that $\|x-z\|_2\leq \|x-y\|_2$. Furthermore, by hypothesis on the uniform discreteness of $\Gamma$ we have that $\tilde{B}^{(x)}_{l/4}(y) \cap \Gamma =\{y\}$ for every $y \in \Gamma$ and $\tilde{B}^{(x)}_{l/4}(y) \cap \tilde{B}^{(x)}_{l/4}(z) =\emptyset$ if $y\neq z$. Thus we have
\begin{equation*}
\begin{aligned}
\sum_{\substack{y \in \Gamma \\ d(x,y) > m}} d(x,y)^{-(n+\varepsilon)}  &\leq n^{(n+\varepsilon)/2}  (l/2)^{-n}\sum_{\substack{y \in \Gamma \\ d(x,y) > m}} (l/2)^n\|x-y\|_2^{-(n+\varepsilon)} \\
&\leq n^{(n+\varepsilon)/2}  (l/2)^{-n} \sum_{\substack{y \in \Gamma \\ d(x,y) > m}} \int_{\tilde{B}^{(x)}_{l/4}(y)} \mathrm{d}^n z \; \; \|x-z\|_2^{-(n+\varepsilon)}  \\
&\leq n^{(n+\varepsilon)/2}  (l/2)^{-n} \int_{\|y-x\|_2>\frac{m}{4}} \mathrm{d}^n z \; \; \|x-z\|_2^{-(n+\varepsilon)} \\
& \leq n^{(n+\varepsilon)/2}  (l/2)^{-n} (2 \pi)^{n-1}\int_{\frac{m}{4}}^{+\infty} \mathrm{d}r \; \; r^{n-1} r^{-(n+\varepsilon)}
\, .
\end{aligned}
\end{equation*}
where in the last integral we used the $n$-spherical coordinates. By explicit integration, the proofs is concluded.
\end{proof}

The following is a sort of ``Sandwich Lemma'' for the Hausdorff distance. It proves that if a set $B$ is sandwiched between two sets $A$ and $C$, then from the perspective of any reference set $X$, the intermediate set $B$ cannot be farther away than both of the extremal sets $A$ and $C$.

\begin{lemma}
\label{lemmaABCX}
Let $A, B, C, X$ be four subsets of a metric space $(\mathcal M, d)$ such that $A \subseteq B \subseteq C$. Then the Hausdorff distance $d_\mathrm{H}$ fulfills
\begin{align*}
d_\mathrm{H}(X, B) &\leq \max\left( d_\mathrm{H}(X, A), d_\mathrm{H}(X, C) \right)\,.
\end{align*}
\end{lemma}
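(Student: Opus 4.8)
The plan is to unwind the definition of the Hausdorff distance and exploit the inclusions $A \subseteq B \subseteq C$ termwise. Recall that $d_\mathrm{H}(Y, Z) = \max\bigl(\sup_{y \in Y} d(y, Z),\, \sup_{z \in Z} d(z, Y)\bigr)$, so I would split the bound on $d_\mathrm{H}(X, B)$ into the two one-sided suprema and handle each separately. Write $M := \max\bigl(d_\mathrm{H}(X, A), d_\mathrm{H}(X, C)\bigr)$.

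For the first direction, I must show $\sup_{x \in X} d(x, B) \leq M$. Since $A \subseteq B$, for any $x \in X$ we have $d(x, B) \leq d(x, A) \leq d_\mathrm{H}(X, A) \leq M$; taking the supremum over $x \in X$ gives the claim. For the second direction, I must show $\sup_{b \in B} d(b, X) \leq M$. Here I use the other inclusion $B \subseteq C$: any $b \in B$ is also an element of $C$, so $d(b, X) \leq \sup_{c \in C} d(c, X) \leq d_\mathrm{H}(X, C) \leq M$; taking the supremum over $b \in B$ finishes it. Combining the two suprema yields $d_\mathrm{H}(X, B) = \max\bigl(\sup_{x \in X} d(x, B),\, \sup_{b \in B} d(b, X)\bigr) \leq M$, which is exactly the assertion.

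There is no real obstacle here: the lemma is purely formal, and the only thing to be careful about is matching the right inclusion to the right one-sided supremum ($A \subseteq B$ controls the distance \emph{from} $X$, since a larger target set can only decrease distances; $B \subseteq C$ controls the distance \emph{to} $X$, since points of $B$ are a subset of the points of $C$). One should also note the edge case where $B$ (hence $A$) is empty, in which both one-sided suprema over the empty set are taken to be $0$ by convention, so the inequality holds trivially; since in our application all the sets involved are nonempty compact subsets of $\mathbb{C}$, this case does not actually arise.
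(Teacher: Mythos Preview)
Your proof is correct and follows essentially the same route as the paper's own argument: both split the Hausdorff distance into its two one-sided suprema, use $A \subseteq B$ to bound $\sup_{x \in X} d(x,B)$ by $d_\mathrm{H}(X,A)$, and use $B \subseteq C$ to bound $\sup_{b \in B} d(b,X)$ by $d_\mathrm{H}(X,C)$. Your additional remark on the empty-set edge case is a small bonus not present in the paper.
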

\begin{proof}
By definition, we have
\begin{align*}
d_\mathrm{H}(X, B) = \max\left( \sup_{x \in X} d(x, B), \sup_{b \in B} d(b, X)\right)\,.
\end{align*}
Now because $A \subseteq B$, we have
\begin{align*}
d(x, B) \,\leq\, d(x, A)\,\leq\, d_\mathrm{H}(X, A),
\end{align*}
and from $B \subseteq C$, we get
\begin{align*}
\sup_{b \in B} d(b, X) \,\leq\, \sup_{c \in C} d(c, X)\,\leq\, d_\mathrm{H}(C, X).
\end{align*}
Thus we obtain $d_\mathrm{H}(X, B) \leq \max(d_\mathrm{H}(X, A), d_\mathrm{H}(X, C)) = M$.
\end{proof}

\section{Definition of BSS algorithms}
\label{appendixBSS}

Computational problems can be solved by algorithms. One can define classes of algorithms corresponding to different models of computation \cite{benartzi_2020}. In the following, we will use Blum-Shub-Smale (BSS) machines to define the computational steps of our algorithms \cite{blum_1989, blum_1998, turingMeetsNewton,topologicalViewComputationModels}. BSS machines are similar to Turing machines \cite{turing_1937} but can store arbitrary real numbers (or elements of any other ring) on their tape. A BSS~machine is characterized by a certain set of states and rules. Any such machine defines a mapping
\begin{gather*}
 f_{\text{BSS}} : \mathbb{R}^\infty \to \{\fbox{\text{no-halt}}\} \cup \mathbb{R}^\infty\,,
\end{gather*}
where $\mathbb{R}^\infty = \mathbb{R}^0 \,\dot\cup\, \mathbb{R}^1 \,\dot\cup\, \mathbb{R}^2 \,\dot\cup\, \dots$ is the set of finite real sequences and \fbox{no-halt} is a special symbol. To determine the value of the function $f_{\text{BSS}}$ at a specific $x \in \mathbb{R}^\infty$, the elements of $x$ are written on the tape, and the value of the function $f_{\text{BSS}}$ will be the numbers on the tape after the machine halts, or \fbox{no-halt} if the machine does not halt on the given inputs. We say that a BSS machine \textit{always halts} if $f(A) \neq \fbox{\text{no-halt}}$ for all $A \in \mathbb{R}^\infty$.

\begin{definition}
Let $(\Omega, \Lambda, (\mathcal M, d), \Xi)$ be a computational problem and let $F_{\text{embed}} : \mathbb{R}^\infty \to \mathcal M$ be a function with dense image. (The function $F_{\text{embed}}$ is used to parameterize the solution space.) Furthermore, suppose that $\Lambda$ is countable and $(f_i)_{i \in \mathbb N}$ is an enumeration of $\Lambda$.

A \textit{BSS algorithm} $\Gamma$ for the problem $(\Omega, \Lambda, (\mathcal M, d), \Xi)$ can be described by a sequence of BSS functions $(c_k)_{k \in \mathbb{N}_0} : \mathbb{R}^k \to \mathbb{R}^\infty$, all of which halt on any input. Furthermore, we require that for any $x \in \mathbb{R}^k$, the vector $a := c_k(x)$ has at least one element, and furthermore
\begin{itemize}
 \item If $a_0 \neq 0$, then $a \in \mathbb{R}^2$, and $a_1$ is an integer.
 \item If $a_0 = 0$, then $a$ has an odd number of elements.
\end{itemize}
\end{definition}

The purpose of these conditions on $a$ is that we wish to reserve the first number of the result $a_0$ to indicate whether the algorithm has terminated in step $k$ or whether a further evaluation is requested. If $a_0 \neq 0$, then $a_1 \in \mathbb{Z}$ shall determine the evaluation function that is requested. If $a_0 = 0$, then the execution terminates, and the following pairs of numbers shall determine the endpoints of the intervals whose union is the result of the computation, parameterized via $F_{\text{embed}}$.

To evaluate a BSS algorithm on a value $x \in \Omega$, we define a series of functions
\begin{align*}
d_k : \Omega \to \mathcal M \,\dot\cup\, \mathbb{R}^{k+1},\quad k \in \mathbb N\,.
\end{align*}
which follow the instructions given by $c_k(x)$. As a first step, we let
\begin{align*}
 d_0(A) = c_0\,,
\end{align*}
where $c_0 \in \mathbb{R}^\infty$ since $c_0$ has zero arguments.

Now for all $k > 0$, the value of $d_k(A)$ depends on the value of $d_{k-1}(A)$. If $d_{k-1}(A) \in \mathbb{R}^{k}$, then the algorithm has not terminated yet, but has evaluated $(k+1)$ evaluation functions. To decide how to proceed in the next step, we then use the function $c_k$. Let
\begin{align*}
 a = (a_0, \dots, a_m) = c_k(d_{k-1}(A))\,.
\end{align*}
If $a_0 \neq 0$, then $c_k$ has determined that another evaluation is necessary. In that case, $m = 1$, and $a_1$ contains the index of the function to be evaluated. Therefore, in this case, we let
\begin{align*}
 d_k(A) = (b_1, \dots, b_k, f_{a_1}(A))\,.
\end{align*}
where $(b_0, \dots, b_{k-1}) = d_{k-1}(A)$, thus appending the result of the requested evaluation to the previous ones.

If on the other hand $a_0 \neq 0$, then $c_k$ has determined that the previous evaluations were sufficient to estimate the result, and the values $a_1, \dots, a_{N}$ contain a representation of the result. In this case, therefore, we let
\begin{align*}
 d_k(A) = F_{\text{embed}}(a_1, \dots, a_N)\,.
\end{align*}

Finally, if $d_{k-1}(A) \in \mathcal M$, then a result has already been found in a previous step, so that we have to perform no further computation and can simply define
\begin{align*}
 d_k(A) = d_{k-1}(A)\,.
\end{align*}

Again, we say that the algorithm \textit{terminates} for an argument $A \in \Omega$ if $d_k(A) \in \mathcal M$ for some $k \in \mathbb N$. If an algorithm terminates for all $A \in \Omega$, we can define a map $\Gamma : \Omega \to \mathcal M$ by mapping $A$ to $d_k(A)$ for the first $k \in \mathbb N$ for which $d_k(A) \in \mathcal M$. If we speak of an \textit{algorithm} $\Omega \to \mathcal M$ in the following, we always mean an algorithm that terminates for all $A \in \Omega$, and we will usually identify the algorithm with its associated map $\Gamma:\Omega\to\mathcal M$.

\section{Evaluation functions for \textit{flc} operators}
\label{appendixEvaluationFunctions}

In this section, we will describe how to augment the set of evaluation functions $\Lambda$ in a way that allows algorithms to make use of the \textit{flc} structure for the \textit{flc} spectral and pseudospectral problems defined in Definitions~\ref{definitionSpectralProblem} and \ref{definitionPseudospectralProblem}.

The evaluation functions have to represent a given \textit{flc} operator by a set of real functions. Let $H$ be an \textit{flc} discrete operator. As a first step, for every $L \in \mathbb{N}$, let $(x_{L,i})_{i = 1, \dots, n_{\textrm{patch}}(L)}$ be an enumeration of the local patches at scale $L$, so that for every $y \in \Rn$, there is exactly one $n \in \{1, \dots, n_{\textrm{patch}}(L)\}$ such that $H$ has equivalent action on $B_L(y)$ and $B_L(x_{L,i})$.

Now for every local patch $B_L(x_m)$, since $\Gamma$ is uniformly discrete, the set $B_L(x_m) \cap \Gamma$ is finite. For any $L > 0$, $m \in \{1, \dots, n_{\rm patch}(L)\}$, let $(p_{L,m,k})$ be the $k$-th point in $B_L(x_m)$, according to some enumeration of the points. Using these definitions, we can now define our conditions on a set of evaluation functions that captures the finite local complexity structure.

\begin{condition}
\label{definitionEvaluationFunctions}
Let
\begin{align*}
 \mathcal{I} := &\;(\{1\} \times \mathbb{N}) \;\dot\cup\; (\{2\} \times \mathbb{N} \times \mathbb{N}) \;\dot\cup\; (\{3\} \times \mathbb{N}\times\mathbb{N}\times\mathbb{N}\times\mathbb{N}) \\
 &\;\dot\cup\; (\{4\} \times \mathbb{N}\times\mathbb{N}\times\mathbb{N}\times\mathbb{N})\,.
\end{align*}
Then a family of functions $(f_i)_{i \in \mathcal I}$ fulfills our conditions for a set of evaluation functions if:
\begin{itemize}
 \item $f_{1,L}(H)$ is the number of local patches, up to equivalent action, of $H$ at scale $L$, defined above as $n_{\textrm{patch}}(L)$.
 \item $f_{2,L,m}(H)$ is the number of points in $\Gamma \cap B_L(x_m)$, defined above as $n_{\rm point}(L, m)$.
 \item $f_{3,L,m,k,l}(H)$ is the matrix element $\langle p_k , H_{B_L(x_m)}  p_l \rangle$, where $p_k$ and $p_l$ are the points in $\Gamma \cap B_L(x_m)$ according to the above enumeration.
 \item $f_{4,L,n,k,i}(H)$ is $i$-th coordinate of $p_{L,n,k}$, defined above as the $k$-th point in the $m$-th patch at scale $L$, or zero if $n > n_{\rm patch}(L)$ or $i > n$ or $k > n_{\rm patch}(L)$.
 \item $f_{5}(H)$ is a bound on the norm of $H$.
 \item $f_{6}(H) =: M$ and $f_{7}(H) =: \varepsilon$ are real numbers such that
 \begin{align*}
|H_{xy}| \leq M \, d(x,y)^{-(n+\varepsilon)}\,,
 \end{align*}
 for all $x, y \in \Gamma$, as in Definition~\ref{definition-short-range}.
\end{itemize}
The evaluation functions $f_{2, L, m}(H)$, $f_{3, L, m, k, l}(H)$, and $f_{4, L, m, k, i}(H)$ are defined to be zero if any index is out of bounds, that is if $m > n_{\rm patch}(L)$ or $k > n_{\rm point}(L, m)$ or $l > n_{\rm point}(L, m)$ or $i > n$.
\end{condition}

\bibliographystyle{amsplain}
\bibliography{main}

\end{document}